\newtheorem{lem}{Lemma}
\newtheorem{thm}{Theorem}
\begin{document}
\title{A forbidden pair for quasi 5-contractible edges}

\author{Shuai Kou$^a$, \quad Weihua Yang$^a$\footnote{Corresponding author. E-mail: ywh222@163.com,~yangweihua@tyut.edu.cn}, \quad Mingzu Zhang$^b$, \quad Shuang Zhao$^a$\\
\small $^a$ Department of Mathematics, Taiyuan University of Technology, Taiyuan 030024, China\\
\small $^b$ College of Mathematics and System Sciences, Xinjiang University, Urumqi 830046, China\\
}
\date{}

\maketitle {\flushleft\bf Abstract:} {\small An edge of a quasi $k$-connected graph is said to be quasi $k$-contractible if the contraction of the edge results in a quasi $k$-connected graph. If every quasi $k$-connected graph without a quasi $k$-contractible edge has either $H_{1}$ or $H_{2}$ as a subgraph, then an unordered pair of graphs $\{H_{1}, H_{2}\}$ is said to be a forbidden pair for quasi $k$-contractible edges. We prove that $\{K_{4}^{-}, \overline{P_{5}}\}$ is a forbidden pair for quasi 5-contractible edges, where $K_{4}^{-}$ is the graph obtained from $K_{4}$ by removing just one edge and $\overline{P_{5}}$ is the complement of a path on five vertices.}
{\flushleft\bf Keywords}: Quasi 5-connected graph; Quasi 5-contractible edge; Forbidden subgraph

\section{Introduction}
In this paper, all graphs considered are finite, simple and undirected graphs, with undefined terms and notations following \cite{Bondy}. For a graph $G$, let $V(G)$ and $E(G)$ denote the set of vertices and the set of edges of $G$, respectively. For $S\subseteq V(G)$, let $G-S$ denote the graph obtained from $G$ by deleting the vertices of $S$ together with the edges incident with them. The \emph{complement} of a graph $G$ is a graph $\overline{G}$ with the same vertex set as $G$, in which any two distinct vertices are adjacent if and only if they are non-adjacent in $G$. Let $K_{n}$, $P_{n}$ and $C_{n}$ denote the complete graph on $n$ vertices, the path on $n$ vertices, and the cycle on $n$ vertices, respectively. Let $K_{n}^{-}$ be the graph obtained from $K_{n}$ by removing just one edge. For two graphs $G$ and $H$, let $G\cup H$ denote the union of $G$ and $H$ and let $G+H$ denote the join of $G$ and $H$. Moreover, for a positive integer $m$, let $mG$ stand for the union of $m$ copies of $G$.

An edge $e=xy$ of $G$ is said to be \emph{contracted} if it is deleted and its ends are identified. The resulting graph is denoted by $G/e$, and the new vertex in $G/e$ is denoted by $\overline{xy}$. Note that, in the contraction, each resulting pair of double edges is replaced by a single edge. A subgraph of $G$ is said to be \emph{contracted} by identifying each component to a single vertex, removing each of the resulting loops and, finally, replacing each of the resulting double edges by a single edge. Let $k\geq 2$ be an integer and let $G$ be a non-complete $k$-connected graph. An edge or a subgraph of $G$ is said to be \emph{k-contractible} if its contraction results in a $k$-connected graph. A $k$-connected graph without a $k$-contractible edge is said to be a \emph{contraction critical k-connected graph}.

A \emph{cut} of a connected graph $G$ is a subset $V^{\prime}(G)$ of $V(G)$ such that $G-V^{\prime}(G)$ is disconnected. A \emph{k-cut} is a cut of $k$ elements. Suppose $T$ is a $k$-cut of $G$. We say that $T$ is a \emph{nontrivial k-cut}, if the components of $G-T$ can be partitioned into subgraphs $G_{1}$ and $G_{2}$ such that $|V(G_{1})|\geq 2$ and $|V(G_{2})|\geq 2$. A ($k-1$)-connected graph is \emph{quasi k-connected} if it has no nontrivial ($k-1$)-cuts. Clearly, every $k$-connected graph is quasi $k$-connected. Let $G$ be a quasi $k$-connected graph. An edge $e$ of $G$ is said to be \emph{quasi k-contractible} if $G/e$ is still quasi $k$-connected. If $G$ does not have a quasi $k$-contractible edge, then $G$ is said to be a \emph{contraction critical quasi k-connected graph}.

Tutte's \cite{Tutte1961} famous wheel theorem implies that every 3-connected graph on more than four vertices contains an edge whose contraction yields a new 3-connected graph. Thomassen \cite{Thomassen} stated that for $k\geq4$, there are infinitely many contraction critical $k$-connected $k$-regular graphs. Moreover, he studied the forbidden subgraph condition for a $k$-connected graph to have a contractible edge and proved the following theorem.

\begin{thm}\label{thm1}
Every $k$-connected triangle-free graph has a $k$-contractible edge.
\end{thm}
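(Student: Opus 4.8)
The plan is to argue by contradiction: suppose $G$ is $k$-connected and triangle-free, yet every edge of $G$ fails to be $k$-contractible. The first step is the standard reduction to cuts. For an edge $e=xy$, contracting $e$ lowers connectivity by at most one, so $G/e$ is $(k-1)$-connected; by hypothesis it is not $k$-connected, hence it has a $(k-1)$-cut $S$. If the contracted vertex $\overline{xy}$ were not in $S$, then $S$, read back inside $G$, would be a cut of $G$ of size $k-1$, contradicting $k$-connectivity; hence $\overline{xy}\in S$, and $T:=(S\setminus\{\overline{xy}\})\cup\{x,y\}$ is a $k$-cut of $G$ containing both $x$ and $y$. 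Thus every edge of $G$ lies inside some $k$-cut.

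The second step is an extremal choice. Among all triples $(xy,T,C)$ in which $xy\in E(G)$, $T$ is a $k$-cut with $\{x,y\}\subseteq T$, and $C$ is a component of $G-T$, fix one for which $|V(C)|$ is as small as possible. Because $|T|=k$ equals the connectivity of $G$, each vertex of $T$ has a neighbour in every component of $G-T$, so $N_G(V(C))\setminus V(C)=T$. Moreover $|V(C)|\ge 2$: if $V(C)=\{v\}$ then $v$ is adjacent to all of $T$, in particular to $x$ and to $y$, and together with the edge $xy$ this produces a triangle, which is forbidden. Now pick a vertex $z\in V(C)$ adjacent to $x$; it exists since $x\in T=N_G(V(C))\setminus V(C)$. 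Applying the first step to the edge $xz$ yields a $k$-cut $S$ with $\{x,z\}\subseteq S$.

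The heart of the proof is the following claim: $G-S$ has a component $C^{*}$ with $\emptyset\ne V(C^{*})\subseteq V(C)\setminus\{z\}$. Granting it, $1\le|V(C^{*})|\le|V(C)|-1<|V(C)|$, and since $S$ already contains the edge $xz$, the triple $(xz,S,C^{*})$ contradicts the minimality of $|V(C)|$, finishing the proof. To prove the claim, write $D:=V(G)\setminus(V(C)\cup T)$ for the far side of $T$; it is nonempty, and every path of $G$ from $V(C)$ to $D$ meets $T$. One then analyses how the components of $G-S$ sit relative to $V(C)$, $T$ and $D$, using two facts: every component of $G-S$ has the whole of $S$ as its boundary (again because $|S|=k$ is the connectivity, so no proper subset of $S$ can be a cut), and — crucially — $N_G(x)\cap N_G(z)=\emptyset$ since $x\sim z$ and $G$ is triangle-free. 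Either some component of $G-S$ lies inside $V(C)\setminus S$, which (as $z\in S$) is the desired $C^{*}$; or one reaches a contradiction, either by a direct path/counting argument or via submodularity of the vertex-boundary function $X\mapsto|N_G(X)\setminus X|$ applied with $X=V(C)$ and a suitable union of components of $G-S$ in the other slot. Triangle-freeness is exactly what rules out the degenerate configurations here — in particular that one component of $G-S$ simultaneously swallows all of $V(C)\setminus S$ and runs through $T$ into $D$, or that $V(C)$ (or $D$) is entirely contained in $S$ — and disposing of these configurations is the part I expect to be the real obstacle; everything else is bookkeeping.

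Finally, for small $k$ one may instead appeal to classical results — Tutte's wheel theorem settles $k=3$ and $k\le 2$ is elementary — so it suffices to push the argument above through for $k\ge 4$, although it is intended to run uniformly for all $k\ge 2$.
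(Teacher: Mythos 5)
The paper does not actually prove Theorem~\ref{thm1}; it is Thomassen's theorem, cited from \cite{Thomassen} and used as a black box, so there is no proof in the paper to compare against.

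Your outline follows what is indeed the standard approach to such results (and, in spirit, Thomassen's): every non-$k$-contractible edge sits inside a $k$-cut; among all triples (edge, $k$-cut through it, component of the complement) pick one minimising the component size $|V(C)|$; use triangle-freeness to get $|V(C)|\ge 2$; take a neighbour $z\in V(C)$ of $x$, obtain a new $k$-cut $S\supseteq\{x,z\}$, and try to find a strictly smaller component of $G-S$ inside $V(C)\setminus\{z\}$. Up to that point everything you write is correct. (One small point worth recording: for the very first step one also needs $|V(G/e)|\ge k+1$ so that ``$(k-1)$-connected but not $k$-connected'' really produces a $(k-1)$-cut; this is fine because a triangle-free $k$-connected graph has at least $2k$ vertices, a fact you will in any case need later.)

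The problem is that the proof is not finished, and you say so yourself: the claim ``$G-S$ has a component $C^{*}$ with $\emptyset\ne V(C^{*})\subseteq V(C)\setminus\{z\}$'' is asserted, not proved, and you explicitly defer ``the real obstacle'' to a hoped-for ``direct path/counting argument or submodularity.'' That claim is exactly where all the substance lies. To close it one must run the full crossing-cuts computation, writing $T=N_G(V(C))$, $D=V(G)\setminus(V(C)\cup T)$, and comparing $|V(C)\cap S|$, $|T\cap S|$, $|D\cap S|$ with $|T\cap B|$, $|T\cap\bar B|$ for a component $B$ of $G-S$; and one must then dispose of the degenerate configurations you gesture at: no component of $G-S$ trapped in $V(C)$ because every one of them runs through $T\setminus S$, or $V(C)\subseteq S$, or $D\subseteq S$. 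Triangle-freeness enters again here, not only via $|V(C)|\ge2$ but also via $N_G(x)\cap N_G(z)=\emptyset$ and via $|V(G)|\ge 2k$ (which rules out $V(C)\cup D\subseteq S$, since $x\in S\cap T$ forces $|S\cup T|<2k$). None of this bookkeeping is present. As it stands, the write-up is a correct plan with a genuine hole precisely where the plan needs to be executed; it would not be accepted as a proof of Theorem~\ref{thm1}.
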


Kawarabayashi~\cite{Kawarabayashi1} proved the following theorem. Theorem~\ref{thm2} is an extension of Theorem~\ref{thm1} in the case of $k$ is odd.

\begin{thm}\label{thm2}
Let $k\geq3$ be an odd integer, and let $G$ be a $k$-connected graph which does not contain $K_{4}^{-}$. Then $G$ has a $k$-contractible edge.
\end{thm}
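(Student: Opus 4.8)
The natural approach is a proof by contradiction via a minimal-fragment analysis, in the spirit of Thomassen's proof of Theorem~\ref{thm1}. Suppose that $G$ is a $k$-connected graph with $k\ge 3$ odd, that $G$ contains no $K_4^-$, and that $G$ has no $k$-contractible edge; the goal is to reach a contradiction.

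First I would record the elementary consequences of the hypotheses. Since $K_{k+1}$ contains $K_4^-$ for $k\ge 3$, we have $G\ne K_{k+1}$, so for every edge $e=xy$ the failure of $k$-connectivity of $G/e$ supplies a $k$-cut $T$ of $G$ with $x,y\in T$; after replacing $T$ by $N_G(F)$ for a suitable union $F$ of components of $G-T$, we may assume $T=N_G(F)$ with both $x$ and $y$ adjacent to $F$ and to $\overline F:=V(G)\setminus(T\cup F)$. The $K_4^-$-free hypothesis says precisely that $|N_G(u)\cap N_G(v)|\le 1$ for every edge $uv$, i.e. every edge of $G$ lies in at most one triangle.

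The heart of the matter is then a minimality argument. Among all pairs $(T,F)$ with $T=N_G(F)$ a $k$-cut containing the two endpoints of some non-contractible edge, I would choose one with $|F|$ minimum, write $x,y\in T$ for such an adjacent pair, and pick $u\in F$ with $ux\in E(G)$; then $ux$ is again non-contractible, so there is a $k$-cut $T'=N_G(F')$ with $u,x\in T'$. Now one runs the usual uncrossing argument: using the $k$-connectivity of $G$ to control $|N_G(F\cap F')|$ and $|N_G(F\cup F')|$ (and the analogues with $\overline{F'}$ in place of $F'$), one obtains --- outside a short list of degenerate configurations (some corner empty, or a ``diagonal'' situation such as $\overline F\cap\overline{F'}=\emptyset$, or very small fragments) --- a fragment of order strictly smaller than $|F|$ whose neighbourhood still contains an adjacent pair of vertices, contradicting the choice of $F$. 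The two hypotheses are exactly what is needed to dispose of the degenerate cases: the $K_4^-$-free condition, through $|N_G(u)\cap N_G(x)|\le 1$, limits how much of $F\cup T$ the cut $T'$ can absorb and so reduces the analysis to a handful of tight configurations, and in each of these the parity of $k$ delivers the contradiction --- typically via a count of the edges between a small fragment and its neighbourhood, or of the degrees inside it, which would have to satisfy an equality attainable only when $k$ is even.

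I expect the main obstacle to be this case analysis itself: enumerating the ways two $k$-cuts through the common vertex $x$ can interact, handling the very small fragments (say $|F|\in\{1,2\}$) where the connectivity estimates are too weak and must be replaced by direct local counting, and, in every surviving configuration, pinpointing the parity obstruction coming from $k\equiv 1\pmod{2}$. The remaining ingredients --- the reduction to $T=N_G(F)$, the crossing-fragment inequalities, and the bookkeeping of which vertices fall on which side --- are routine.
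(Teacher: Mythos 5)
First, this theorem is not proved in the paper: it is stated and attributed to Kawarabayashi~\cite{Kawarabayashi1}, so there is no in-paper proof to compare against. Evaluating your sketch on its own terms: the framework you describe (normalising to $T=N_G(F)$, choosing a minimum fragment, uncrossing two $k$-cuts through a common vertex, using $K_4^-$-freeness to constrain how the two cuts can overlap) is the right family of arguments, and your reformulation of $K_4^-$-freeness as ``every edge lies in at most one triangle'' is correct and useful. However, the crucial step --- where the parity of $k$ is used --- is left as a placeholder: ``typically via a count of the edges between a small fragment and its neighbourhood, or of the degrees inside it, which would have to satisfy an equality attainable only when $k$ is even'' is an expectation, not an argument. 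You do not identify the count, the inequality, or why oddness breaks it; and since the statement is false for even $k$ (as the paper itself notes just before Theorem~\ref{thm3}), this is exactly the part that cannot be waved at.

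The parity in fact enters earlier and in a sharper form than your sketch suggests, and pinning it down is essential. For any vertex $x$, $K_4^-$-freeness forces $G[N_G(x)]$ to have maximum degree at most $1$: if some $y\in N_G(x)$ had two neighbours $z,w\in N_G(x)$, then $G[\{x,y,z,w\}]$ would contain $K_4^-$. So $G[N_G(x)]$ is a matching together with isolated vertices. A contraction-critical $k$-connected graph has a vertex $x$ of degree exactly $k$ (Mader), and since $k$ is odd, $N_G(x)$ cannot be perfectly matched; hence some $y\in N_G(x)$ has no neighbour inside $N_G(x)$, i.e.\ the edge $xy$ lies in no triangle. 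That is the concrete payoff of $k$ being odd, and the fragment/uncrossing machinery is then aimed at such a triangle-free edge incident to a degree-$k$ vertex. Without isolating this observation, your plan has no engine to produce the contradiction, so as written the proposal has a genuine gap at its central step.
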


The same conclusion does not hold when $k$ is even. However, Kawarabayashi~\cite{Kawarabayashi2} proved the following theorem.

\begin{thm}\label{thm3}
Let $k\geq3$ be an integer, and let $G$ be a $k$-connected graph which not contain $K_{4}^{-}$. Then there exists a $k$-contractible edge which is not contained in a triangle or there exists a $k$-contractible triangle.
\end{thm}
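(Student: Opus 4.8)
The plan is to argue by contradiction: assume $G$ is $k$-connected and contains no $K_4^-$, yet has neither a $k$-contractible triangle nor a $k$-contractible edge that lies in no triangle. The first step is to record the rigid local structure forced by $K_4^-$-freeness. Two adjacent vertices have at most one common neighbour, so $G$ is $K_4$-free (hence non-complete, and the usual edge-contraction lemmas apply), every edge lies in at most one triangle, any two triangles meet in at most one vertex, and for a triangle $xyz$ the vertices $x$ and $y$ have $z$ as their unique common neighbour. I would also fix the identity $G/xyz=(G/xy)/\overline{xy}z$, valid because $xyz$ is a triangle, which converts ``the triangle $xyz$ is not $k$-contractible'' into the statement that $G$ (or $G/xy$) has a small vertex cut meeting all three of $x,y,z$.

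Next I would translate the hypotheses into statements about cuts. Under the contradiction hypothesis, any edge $e=xy$ lying in no triangle is not $k$-contractible, so, $G$ being non-complete and $k$-connected, $G$ has a $k$-cut $T$ with $x,y\in T$; writing $G-T$ as $A\cup B$ with $A,B$ non-empty unions of components produces a pair of fragments with boundary $T$. If every edge of $G$ already lies in a triangle I would treat this case separately: the edge set then decomposes into edge-disjoint triangles and the local structure is very restricted, so one aims to produce a $k$-contractible triangle directly. Otherwise I bring in the fragment/atom machinery: using the submodularity of the boundary operator (for suitable $A,B$ either $A\cap B$ and $A\cup B$ are again fragments, or one of $A\setminus B$, $B\setminus A$ is a fragment whose boundary has fewer than $k$ vertices, which is impossible) I would control how such fragments intersect and then choose an atom, an inclusion-minimal fragment. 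The core of the argument is then a local analysis at this atom: one examines the edges inside the atom and those joining it to its boundary, shows, via $k$-connectivity together with both non-contractibility hypotheses in their cut form above, that these edges are forced into triangles, and propagates the triangle structure until two triangles sharing an edge appear, i.e. a $K_4^-$, contradicting the hypothesis.

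I expect the main obstacle to be the interaction of the two escape clauses in the conclusion, together with the parity phenomenon flagged after Theorem~\ref{thm1}. When a $k$-contractible edge $xy$ does exist it may lie in a triangle $xyz$, and whether $G/xyz$ is still $k$-connected is governed by whether $\overline{xy}z$ is a $k$-contractible edge of $G/xy$; making this precise requires a careful comparison of the $k$-cuts of $G$ with those of $G/xy$, during which $K_4^-$-freeness may be destroyed by the contraction, so one cannot simply induct by re-applying Theorem~\ref{thm2}. Worse, for even $k$ there genuinely are contraction-critical $k$-connected $K_4^-$-free graphs, for instance line graphs of triangle-free cyclically $4$-edge-connected cubic graphs when $k=4$, such as $K_3\,\square\,K_3$, so in those cases the conclusion must be delivered entirely through the ``$k$-contractible triangle'' branch, and the fragment analysis has to be robust enough to locate such a triangle while never accidentally requiring a contractible edge. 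The remaining difficulties are the small-atom configurations (atoms consisting of a single vertex of degree $k$ or $k+1$) and the all-edges-in-triangles case; I would expect each of these to be pinned down using the $K_4^-$-free counting observations from the first step, though the bookkeeping is where the real work lies.
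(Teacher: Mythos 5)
This theorem is not proved in the paper you are reading: it is stated as a citation to Kawarabayashi \cite{Kawarabayashi2} and used later only as a black box (in the proof of Theorem \ref{thm5}, to conclude that a contraction-critical quasi $5$-connected $K_4^-$-free graph must have connectivity exactly $4$). There is therefore no in-paper proof for me to compare your attempt against; the argument lives in the cited 2002 J.~Combin.\ Theory Ser.~B paper.

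Taken on its own, what you have written is a plan rather than a proof, and you acknowledge as much. The broad shape --- convert ``not $k$-contractible'' into a $k$-cut containing the relevant endpoints, set up fragments, use submodularity of the boundary function, descend to an atom, and squeeze a $K_4^-$ out of $K_4^-$-freeness plus the cut structure --- is genuinely in the spirit of how Kawarabayashi's argument (and the Thomassen/Egawa/Ando line of contractibility proofs) goes, and you correctly flag the parity obstruction for even $k$ and the need to realise the conclusion through the triangle branch in that regime. But the hard parts are left entirely open, and one of them is already slightly misstated: a triangle $xyz$ fails to be $k$-contractible precisely when $G$ has a separating set of size at most $k+1$ (not $k$) containing $\{x,y,z\}$, since contracting the triangle absorbs two vertices into one; working with $(k+1)$-cuts and $k$-cuts simultaneously is exactly where the fragment bookkeeping becomes delicate, and your sketch does not engage with it. Likewise the ``every edge lies in a triangle'' case and the small-atom configurations, which you defer, are where the substance of Kawarabayashi's proof resides. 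So this is a reasonable research plan but not yet a proof, and I cannot certify it against a proof the paper does not contain.
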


The following result due to Ando and Kawarabayashi~\cite{Ando}. They showed that if $s(t-1)<k$, then $\{K_{2}+sK_{1}, K_{1}+tK_{2}\}$ is a forbidden pair for $k$-contractible edges for any $k\geq5$.

\begin{thm}\label{thm4}
Let $k$, $s$ and $t$ be positive integers such that $k\geq5$ and $s(t-1)<k$. If a $k$-connected graph $G$ has neither $K_{2}+sK_{1}$ nor $K_{1}+tK_{2}$, then $G$ has a $k$-contractible edge.
\end{thm}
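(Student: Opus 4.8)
The plan is to argue by contradiction: assume $G$ is a contraction critical $k$-connected graph that has neither $K_{2}+sK_{1}$ nor $K_{1}+tK_{2}$, and derive a contradiction with $s(t-1)<k$. Observe first that the statement contains Theorem~\ref{thm1} as the special case $t=1$ (then $K_{1}+tK_{2}=K_{3}$ and $s(t-1)=0<k$), so one should expect the argument to be an extension of Thomassen's proof of the triangle-free case. The first step is to rewrite the two hypotheses locally. A copy of $K_{2}+sK_{1}$ is an edge together with $s$ vertices adjacent to both of its ends, so ``$G$ has no $K_{2}+sK_{1}$'' is equivalent to $|N_{G}(x)\cap N_{G}(y)|\le s-1$ for every edge $xy$, that is, every edge of $G$ lies in at most $s-1$ triangles. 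A copy of $K_{1}+tK_{2}$ is a vertex together with a matching of size $t$ in its neighbourhood, so ``$G$ has no $K_{1}+tK_{2}$'' is equivalent to: for every vertex $v$, the graph $G[N_{G}(v)]$ has matching number at most $t-1$.

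Next I would bring in the structure theory of contraction critical graphs: (i) for every edge $e=xy$ there is a $k$-cut $T_{e}$ of $G$ with $\{x,y\}\subseteq T_{e}$, and every component of $G-T_{e}$ has neighbourhood exactly $T_{e}$; and (ii) the fragment/uncrossing machinery --- a set $F$ with $F\ne\emptyset$, $N_{G}(F)$ a $k$-cut and $\overline{F}:=V(G)\setminus(F\cup N_{G}(F))\ne\emptyset$ is a \emph{fragment}, $\overline{F}$ is again a fragment, and two fragments can be ``uncrossed'' using the submodularity of $X\mapsto|N_{G}(X)|$. Using these I would take a fragment $A$ of minimum order, set $T=N_{G}(A)$ and $B=\overline{A}$ (so $|B|\ge|A|$), dispose of $|A|=1$ directly (then $T$ is just the neighbourhood of a single vertex), and for $|A|\ge2$ first note that $G[A]$ has no isolated vertex --- an isolated $v\in A$ would satisfy $N_{G}(v)=T$, so $\{v\}$ would be a fragment, contradicting minimality --- then pick an edge with both ends in $A$, apply (i) to obtain a $k$-cut through it, and uncross the resulting fragments against $(A,T,B)$. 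The aim of this step is to exhibit a vertex $v$ and a set of at most $k$ vertices capturing almost all of $N_{G}(v)$; in other words, a vertex whose neighbourhood is forced to be extremely ``local''.

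The inequality $s(t-1)<k$ is consumed in the following counting step. For the vertex $v$ above choose a maximal matching $M$ in $G[N_{G}(v)]$ and put $W:=V(M)$ and $I:=N_{G}(v)\setminus W$. Since $G$ has no $K_{1}+tK_{2}$ we get $|M|\le t-1$, hence $|W|\le2(t-1)$, while $I$ is independent and $|I|\ge\deg_{G}(v)-2(t-1)\ge k-2(t-1)$. The crucial observation is that no $w\in W$ can have $s$ or more neighbours inside $I$: if it did, then $v$ and those $s$ vertices would all lie in $N_{G}(w)$, so the edge $vw$ would be contained in at least $s$ triangles, a forbidden $K_{2}+sK_{1}$. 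Therefore the at most $2(t-1)$ vertices of $W$ carry at most $2(t-1)(s-1)$ edges to $I$. Against this, the locality forced in the previous step makes every vertex of the large independent set $I$ spend most of its (at least $k$) incident edges inside the bounded region around $v$, and hence inside $W$, so that the number of edges between $W$ and $I$ is forced to be large; comparing the two estimates contradicts $s(t-1)<k$.

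The step I expect to be the real obstacle is making the fragment analysis quantitatively tight. A crude pigeonhole over $W$ only forces $k$ to be of order $(s+1)(t-1)$, which is far from $s(t-1)<k$; to reach the sharp bound one must push the uncrossing until it shows that $v$ misses only a handful of vertices of its own fragment and that the $k$-cuts through the edges at $v$ confine essentially all of $N_{G}(v)$ to $W$. Getting that bookkeeping exactly right --- so that the number of $W$-to-$I$ edges really does exceed $2(t-1)(s-1)$ under the precise hypothesis --- is where the work lies; the qualitative picture is comparatively easy.
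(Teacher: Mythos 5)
The paper does not prove this theorem; Theorem~\ref{thm4} is stated as a background result and attributed to Ando and Kawarabayashi \cite{Ando}, so there is no in-paper proof against which to check your argument. I will therefore comment on the proposal on its own merits.

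Your reformulations of the two forbidden-subgraph hypotheses are correct: absence of $K_{2}+sK_{1}$ is exactly the statement that every edge lies in at most $s-1$ triangles, and absence of $K_{1}+tK_{2}$ is exactly the statement that $G[N_{G}(v)]$ has matching number at most $t-1$ for every $v$. The count $e(W,I)\le|W|(s-1)\le 2(t-1)(s-1)$ is also correct. However, the last step you are relying on --- ``the locality forced by the fragment analysis makes every vertex of $I$ spend most of its $\ge k$ incident edges inside $W$'' --- is not merely technically delicate; in a large part of the parameter range it is impossible. Each vertex of $I$ can send at most $|W|\le 2(t-1)$ edges into $W$, because $W$ has only that many vertices. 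When $s$ is large and $t$ is small (say $s=100$, $t=2$, $k=101$, so $s(t-1)=100<k$), every vertex of $I$ has degree at least $101$ but $|W|\le 2$, so at most two of its edges can land in $W$; no fragment analysis, however tight, can make ``most'' of its edges go to a set of size two. Even in the best conceivable case for your scheme --- every vertex of $I$ sends at least one edge into $W$ --- one only gets $k-2(t-1)\le|I|\le e(W,I)\le 2(t-1)(s-1)$, hence $k\le 2s(t-1)$, which is off by a factor of two from the hypothesis $s(t-1)<k$. So the gap is not a matter of pushing the uncrossing harder within your framework; the framework itself (double-counting edges between $W$ and the rest of $N_{G}(v)$) cannot reach the sharp threshold, and the actual argument of Ando and Kawarabayashi must be organized around a different quantity. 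The standard route in this literature is to take a fragment $A$ of minimum order with $T=N_{G}(A)$, extract from the contraction-criticality a family of $k$-cuts through edges incident with a carefully chosen low-degree vertex, and count the resulting adjacencies between $A$ and $T$ (rather than within a single neighbourhood $N_{G}(v)$); the products $s(t-1)$ then arise from pairing the ``at most $s-1$ triangles per edge'' constraint against the ``at most $t-1$ matching edges'' constraint across the cut, not from a degree count on one side. You should recast the last step along those lines, or consult \cite{Ando} directly, before the argument can be considered complete.
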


We focus on quasi 5-connected graphs and obtain the following result.

\begin{figure*}
  \centering
  \subfigure[$K_{4}^{-}$\label{fig1a}]{\includegraphics{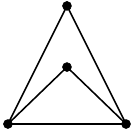}}\hspace{0.8cm}
  \subfigure[$\overline{P_{5}}$\label{fig1b}]{\includegraphics{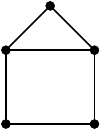}}\hspace{0.8cm}
  \subfigure[$K_{3,3}$\label{fig1c}]{\includegraphics{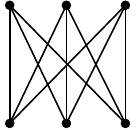}}\hspace{0.8cm}
  \subfigure[cube\label{fig1d}]{\includegraphics{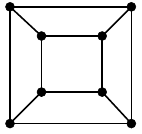}}\hspace{0.8cm}
  \subfigure[$C_{4}^{+}$\label{fig1e}]{\includegraphics{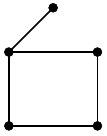}}
  \caption{Graphs $K_{4}^{-}$, $\overline{P_{5}}$, $K_{3,3}$, cube and $C_{4}^{+}$}
  \label{fig1}
\end{figure*}

\begin{thm}\label{thm5}
Let $G$ be a quasi 5-connected graph. If $G$ contains neither $K_{4}^{-}$ nor $\overline{P_{5}}$, then $G$ has a quasi 5-contractile edge.
\end{thm}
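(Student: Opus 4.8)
Suppose, for contradiction, that $G$ is a contraction-critical quasi $5$-connected graph containing neither $K_{4}^{-}$ nor $\overline{P_{5}}$; thus $G$ is $4$-connected and has no nontrivial $4$-cut, and we may assume $|V(G)|\ge 6$. The plan is to exploit, for every edge, the structural reason it is not quasi $5$-contractible, and to feed this into the two forbidden subgraphs until a contradiction appears.

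First I would establish a dichotomy for each edge $e=xy$. Since $G/e$ is not quasi $5$-connected, either \textbf{(A)} $G/e$ is not $4$-connected, or \textbf{(B)} $G/e$ is $4$-connected but has a nontrivial $4$-cut. In case (A), a $3$-cut of $G/e$ necessarily contains the contracted vertex $\overline{xy}$ and lifts to a $4$-cut $T=\{x,y,a,b\}$ of $G$, which must be trivial; analysing the components of $G-T$ yields either a copy of $K_{4}^{-}$ on $xy$ (from two triangles through $e$) --- excluded --- or a common neighbour $w$ of $x$ and $y$ with $N_{G}(w)=\{x,y,a,b\}$, so $xyw$ is a triangle whose apex $w$ has degree $4$. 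In case (B), a nontrivial $4$-cut of $G/e$ contains $\overline{xy}$ and lifts to a $5$-cut $T=\{x,y,a,b,c\}$ of $G$ with $G-T$ splitting into two parts each of order $\ge 2$; moreover, since any separating $4$-subset of $T$ would be a nontrivial $4$-cut of $G$ unless one side is a single vertex, every component $D$ of $G-T$ has $|N_{G}(D)|\in\{4,5\}$, with $|N_{G}(D)|=4$ forcing $|D|=1$. Since $G$ has no $K_{4}^{-}$, every edge of $G$ lies in at most one triangle, so in case (A) the degree-$4$ apex $w$ is determined by $e$.

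Next I would analyse the local picture around a triangle $xyw$ with $\deg_{G}(w)=4$, say $N_{G}(w)=\{x,y,a,b\}$. The decisive use of $\overline{P_{5}}$-freeness is that one can have neither $\{xa,yb\}\subseteq E(G)$ nor $\{xb,ya\}\subseteq E(G)$ --- either choice gives a $5$-cycle through $w$ together with the chord $wx$, i.e.\ a copy of $\overline{P_{5}}$ --- and, if $ab\in E(G)$, that there is no edge between $\{x,y\}$ and $\{a,b\}$ at all. Iterating these restrictions over the degree-$4$ vertices and the (edge-disjoint) triangles through them, I would show that in the ``all edges of type (A)'' scenario $G$ must be $4$-regular with every neighbourhood inducing $2K_{2}$, each vertex lying in exactly two triangles; I would then trace a shortest path between two vertex-disjoint triangles of $G$ --- which exists by $4$-connectivity --- keeping track of the links $K_{3,3}$, cube and $C_{4}^{+}$ that can occur, until either the path closes into a $\overline{P_{5}}$ or $K_{4}^{-}$, or the deletion of a single vertex exhibits a nontrivial $4$-cut, contradicting quasi $5$-connectedness.

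The main obstacle, and the last step, is to eliminate edges of type (B), for which the obstructing cut has order $5$ --- one more than the connectivity --- so that quasi $5$-connectedness bites only through its prohibition of nontrivial $4$-cuts. I would choose, over all type-(B) edges and their $5$-cuts $T$, a fragment $F$ (a union of components of $G-T$ forming one side of the nontrivial split) of smallest order, and run the usual crossing/uncrossing argument on $X\mapsto N_{G}(X)$: two such cuts that cross produce, by submodularity of the neighbourhood function, either a smaller fragment --- contradicting minimality --- or a separating $4$-subset of $G$, i.e.\ a trivial $4$-cut, which together with the constraint ``$|N_{G}(D)|=4\Rightarrow|D|=1$'' from the first step pins $F$ and $N_{G}(F)$ down to a short list of small configurations. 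Inspecting each, one finds $K_{4}^{-}$ or $\overline{P_{5}}$ inside $G[F\cup N_{G}(F)]$, or a direct contradiction with $4$-connectivity. Hence no edge is of type (B), every edge is of type (A), and the previous step completes the proof. I expect the two genuinely delicate points to be this crossing-cut analysis --- precisely because the ambient connectivity is one short of the cut size --- and verifying that each forbidden subgraph reconstructed along the way is present literally as a subgraph of $G$.
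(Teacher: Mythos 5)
Your overall scaffold --- split every non-contractible edge into case (A), where the contraction destroys $4$-connectivity and produces a triangle with a degree-$4$ apex, versus case (B), where the obstruction is a nontrivial $4$-cut of $G/e$, hence a $5$-cut of $G$; then feed the local pictures into the two forbidden subgraphs --- is indeed the same kind of argument the paper runs. The $\overline{P_{5}}$-bookkeeping near a degree-$4$ vertex is right: no edge between $\{x,y\}$ and $\{a,b\}$ when $ab\in E(G)$, and the analysis of which neighbourhood types ($4K_{1}$, $2K_{1}\cup K_{2}$, $2K_{2}$) survive is precisely what the paper's Lemmas~6--8 establish. But there are two places where what you describe as ``I would do X'' is in fact the entire content of the proof, and as written neither closes.

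The first, and more serious, gap is the all-type-(A) endgame. Once you have $G$ $4$-regular with every neighbourhood a $2K_{2}$ (so every edge lies in exactly one triangle and every vertex in exactly two), the paper does not trace paths between triangles: it observes that such a $G$ is, by Martinov's characterisation (Lemma~\ref{lem1}/\ref{lem2}), either $C_{n}^{2}$ or the line graph of a cubic cyclically $4$-connected graph, and then uses Wormald's generation theorem (Lemma~\ref{lem3}) to show the underlying cubic graph always contains $C_{4}^{+}$, whence $L(C_{4}^{+})\cong\overline{P_{5}}$ sits inside $G$. Your ``shortest path between two disjoint triangles, watching for $K_{3,3}$, the cube and $C_{4}^{+}$'' has no argument attached; if pursued honestly it would need to reprove essentially the Martinov/Wormald structure, which is not a local computation. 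Without that classification, or some genuine substitute, the type-(A) case is open. The second gap is the type-(B) analysis. The paper does not run a generic crossing/uncrossing over all $5$-cuts of a $4$-connected graph --- indeed, as you note, the ambient connectivity being one short of the cut size means submodularity alone does not force termination. What the paper does is anchor the argument at a degree-$4$ vertex $x$, use Lemma~\ref{lem4} to guarantee $G/xx_{i}$ is $4$-connected for the right neighbours $x_{i}$, take quasi-\emph{atoms} with respect to $E(x)$ (not arbitrary minimum fragments), and then make a sustained case analysis (Claims~1--7 of Lemma~\ref{lem5}, Claims~1--5 of Lemma~\ref{lem7}) that repeatedly pins down $|F_{i}\cap T_{j}|$ and $|F_{i}\cap F_{j}|$ to $0$, $1$, or $2$ before producing a concrete $K_{4}^{-}$ or $\overline{P_{5}}$. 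Your paragraph compresses all of this to ``a short list of small configurations'' and defers the verification; that deferral is exactly the part of the proof that is hard.

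Two smaller remarks: the claim that the lift of a trivial $4$-cut ``yields either a copy of $K_{4}^{-}$ on $xy$ (from two triangles through $e$)'' is a red herring --- the trivial $4$-cut gives you one degree-$4$ apex $w$ of a triangle on $e$, and that is all you get and all you need. Also, the paper opens by invoking Kawarabayashi's Theorem~\ref{thm2} (with $k=5$) to conclude $\kappa(G)=4$ exactly, which is a clean way to rule out the $5$-connected case immediately; your write-up only records $\kappa(G)\ge 4$.
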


\section{Preliminaries}
In this section, we introduce some more definitions and preliminary lemmas.

For a graph $G$, let $E(x)$ denote the set of edges incident with $x\in V(G)$. Let $N_{G}(x)$ denote the set of neighbors of $x\in V(G)$. The degree of $x\in V(G)$ is denoted by $d_{G}(x)$. Let $\delta(G)$ denote the minimum degree of $G$. Let $V_{k}(G)$ denote the set of vertices of degree $k$ in $G$. For $S\subseteq V(G)$, let $N_{G}(S)=\cup_{x\in S}N_{G}(x)-S$ and let $G[S]$ denote the subgraph induced by $S$.

For each integer $n\geq5$, let $C_{n}^{2}$ be the graph obtained from a cycle $C_{n}$ by joining all pairs of vertices of distance two on the cycle. A graph in which each vertex has degree three is called a \emph{cubic graph}. A cubic graph $G$ is called \emph{cyclically 4-connected} if $G$ has four disjoint paths between any two disjoint cycles of $G$. Let $G$ be a graph with nonadjacent edges $e_{1}$ and $e_{2}$. Let $H$ be the graph obtained by subdividing both $e_{1}$ and $e_{2}$ and then adding a new edge connecting the internal vertices of the two paths. This operation is called \emph{adding a handle} to $G$.
The graph $L(G)$ is called the \emph{line graph} of $G$ and is defined as follows. Let $V(L(G))=E(G)$, and for every pair $\{e, f\}\subseteq V(L(G))$, there exists an edge from $e$ to $f$ if and only if they are adjacent edges in $G$.

Let $G$ be a quasi $k$-connected graph and let $E_{0}=\{e\in E(G):G/e$ is $(k-1)$-connected, but not quasi $k$-connected$\}$. For $xy\in E_{0}$, $G/xy$ has a nontrivial $(k-1)$-cut $T^{\prime}$ by the definition of quasi $k$-connected. Furthermore, $\overline{xy}\in T^{\prime}$, for otherwise, $T^{\prime}$ is also a nontrivial $(k-1)$-cut of $G$, contradicts the fact that $G$ is quasi $k$-connected. This implies that $T=(T^{\prime}-\overline{xy})\cup\{x, y\}$ is a $k$-cut of $G$. Moreover, $G-T$ can be partitioned into two subgraphs, each containing more than one vertex. The vertex set of each such subgraph is called a \emph{quasi T-fragment} of $G$ or, briefly, a \emph{quasi fragment}. For an edge $e$ of $G$, a quasi fragment $F$ of $G$ is said to be a \emph{quasi fragment with respect to e} if $V(e)\subseteq N_{G}(F)$. For a set of edges $E^{\prime}\subseteq E(G)$, we say that $F$ is a \emph{quasi fragment with respect to $E^{\prime}$} if $F$ is a quasi fragment with respect to some $e\in E^{\prime}$. A quasi fragment with respect to $e$ or $E^{\prime}$ with least cardinality is called a \emph{quasi atom} with respect to $e$ and $E^{\prime}$ respectively.

The following three lemmas characterize the contraction critical 4-connected graphs. The graphs $K_{3,3}$ and the cube are shown in Figures \ref{fig1c} and \ref{fig1d}, respectively.

\begin{lem}\cite{Martinov1}\label{lem1}
A 4-connected graph is contraction critical if and only if it is 4-regular and each of its edges belongs to a triangle.
\end{lem}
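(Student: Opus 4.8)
The plan is to prove the two directions of the biconditional separately; the converse is short and I would do it first. Suppose $G$ is $4$-connected and $4$-regular with every edge in a triangle, and let $e=xy\in E(G)$ with $xyz$ a triangle. If $|V(G)|=5$ then $G=K_{5}$ (the only $4$-regular graph on five vertices) and $G/e\cong K_{4}$ is not $4$-connected; so assume $|V(G)|\geq 6$. Since $d_{G}(z)=4$ and $x,y\in N_{G}(z)$, write $N_{G}(z)=\{x,y,p,q\}$ and put $T=\{x,y,p,q\}$; this is a set of exactly four vertices with $z\notin T$, so $z$ is an isolated vertex of $G-T$, and since $|V(G)\setminus T|=|V(G)|-4\geq 2$ the graph $G-T$ is disconnected. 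Hence $(T\setminus\{x,y\})\cup\{\overline{xy}\}$ is a cut of $G/e$ of size three, so $G/e$ is not $4$-connected. As $e$ was arbitrary, $G$ has no $4$-contractible edge, that is, $G$ is contraction critical.

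For the forward direction, let $G$ be a non-complete contraction critical $4$-connected graph (the case $G=K_{5}$ being immediate), so $|V(G)|\geq 6$. I would first record the standard equivalence: in a $k$-connected graph, an edge $e=uv$ fails to be $k$-contractible if and only if $G$ has a $k$-cut containing both $u$ and $v$ --- a $(k-1)$-cut $W$ of $G/e$ with $\overline{uv}\notin W$ would already be a cut of $G$, so $\overline{uv}\in W$ and $(W\setminus\{\overline{uv}\})\cup\{u,v\}$ is the required $k$-cut. Specialising to $k=4$: for every edge $uv$ there is a $4$-cut $T\ni u,v$, and since $G$ is $4$-connected the vertex set of $G-T$ splits into nonempty parts $A,B$ with no edges between them and with $N_{G}(A)=N_{G}(B)=T$, so each of $u,v$ has a neighbor in $A$ and a neighbor in $B$. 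The engine is then the usual fragment calculus (the ordinary-connectivity analogue of the notions of Section~2): for two $4$-cuts $T,S$ with $V(G-T)=A\sqcup B$ and $V(G-S)=C\sqcup D$ the bipartitions into sides, the standard submodular estimate $|N_{G}(A\cap C)|+|N_{G}(A\cup C)|\leq |N_{G}(A)|+|N_{G}(C)|=8$ (together with its variants obtained by complementing $A$ and/or $C$, valid when the relevant intersections and their complements are nonempty) forces, since each side of a $4$-cut has neighborhood of size exactly $4$, that in every configuration of which of the four ``quadrants'' is empty one of the resulting sets is again a side of a $4$-cut, of order at most $\min(|A|,|C|)$.

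The forward direction now splits into ``every edge lies in a triangle'' and ``$G$ is $4$-regular'', both by a minimal-fragment (atom) argument. For the first, I would suppose some edge lies in no triangle, let $E'$ be the set of all such edges, and choose a side $A$ of a $4$-cut $T=N_{G}(A)$ with $V(f)\subseteq T$ for some $f\in E'$, of smallest possible order (an atom with respect to $E'$). If $|A|=1$, say $A=\{w\}$, then $N_{G}(w)=T$, so $w$ is a common neighbor of the two endpoints of $f$, contradicting $f\in E'$. If $|A|\geq 2$, I would pick a neighbor $a_{1}\in A$ of an endpoint $u$ of $f$ (recall $u\in N_{G}(A)$) and a $4$-cut $S\ni u,a_{1}$ supplied by contraction criticality; crossing $A$ with the sides of $G-S$ and feeding the submodular inequalities into the minimality of $A$ should collapse the cases until a strictly smaller side arises that still separates the endpoints of some edge of $E'$, contradicting the choice of $A$. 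Hence every edge of $G$ is in a triangle. For $4$-regularity, I would rerun the atom argument over \emph{all} edges: the same collapse produces a side of order $1$, hence a vertex of degree $4$ and $\delta(G)=4$; then, assuming some vertex has degree $\geq 5$ and applying the analysis to a $4$-cut through an incident edge, the triangle property just established should be seen to be incompatible with the forced local structure of a minimum side attached to such a cut, giving the final contradiction.

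I expect the main obstacle to be precisely the fragment case analysis in the $|A|\geq 2$ step, and its analogue in the regularity step: enumerating how a minimum side $A$ and a crossing $4$-cut $S$ can sit relative to one another, checking in each case that the set produced is again a side of a $4$-cut \emph{separating a triangle-free edge} (so that minimality of $A$ is genuinely violated), and thereby forcing $|A|=1$. Everything else --- the translation to $4$-cuts, the submodular inequalities themselves, and the entire converse direction --- is routine bookkeeping.
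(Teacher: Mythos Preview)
The paper does not supply a proof of this lemma; it is quoted as a result of Martinov \cite{Martinov1} and used as a black box in the proof of Theorem~\ref{thm5}. There is therefore nothing in the paper to compare your argument against.

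As for your sketch on its own merits: the converse direction is complete and correct. For the forward direction you have identified the right machinery --- the translation of non-contractibility of $uv$ into a $4$-cut containing $\{u,v\}$, and the submodular fragment calculus applied to a minimum side (atom) --- and this is indeed how Martinov-style proofs proceed. However, your outline is still only a plan: you yourself flag the crossing/minimality case analysis for both the ``every edge lies in a triangle'' step and the $4$-regularity step as the part you have not carried out. In particular, in the $|A|\geq 2$ argument you need to verify that the smaller side you extract is still the side of a $4$-cut that contains both ends of \emph{some} edge of $E'$ (not merely of some edge), and in the regularity step the statement ``the triangle property just established should be seen to be incompatible with the forced local structure'' is an assertion, not an argument. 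These are exactly the places where the work lies, so what you have is a correct strategy rather than a proof.
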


\begin{lem}\cite{Martinov2}\label{lem2}
The only contraction critical 4-connected graphs are $C_{n}^{2}$ for $n\geq5$ and the line graphs of the cubic cyclically 4-connected graphs.
\end{lem}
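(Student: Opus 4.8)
The plan is to derive Lemma~\ref{lem2} from Lemma~\ref{lem1}: the task reduces to determining all $4$-connected graphs $G$ that are $4$-regular and have every edge on a triangle, and I assume these three properties throughout. I would first dispose of the case $K_{4}\subseteq G$; using that $G$ is $4$-regular and that every edge from the $K_{4}$ to the rest of $G$ must still lie on a triangle, a short argument (whose only non-elementary input is the absence of cuts of size at most three) forces $G=K_{5}=C_{5}^{2}$. So from now on $G\ne K_{5}$, hence $|V(G)|\ge 6$ and $G$ is $K_{4}$-free. Next I record the basic dichotomy on triangle multiplicities: if an edge $uv$ lay on three triangles then $u,v$ would have three common neighbours $x,y,z$, forcing $N_{G}(u)=\{v,x,y,z\}$ and $N_{G}(v)=\{u,x,y,z\}$, so $\{x,y,z\}$ would be a $3$-cut separating $\{u,v\}$ from the rest — impossible. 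Hence every edge of $G$ lies on exactly one or exactly two triangles, and I split into two cases.

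\textbf{Case 1: every edge of $G$ lies on exactly one triangle.} For a vertex $v$ with $N_{G}(v)=\{w_{1},w_{2},w_{3},w_{4}\}$, the triangle on $vw_{1}$ uses a second neighbour, say $w_{2}$; the triangle on $vw_{3}$ cannot use $w_{1}$ or $w_{2}$ (that would put $vw_{1}$ or $vw_{2}$ on two triangles), hence it uses $w_{4}$. So the four edges at $v$ pair up, exactly two triangles pass through $v$, and they meet only in $v$. The triangles of $G$ are therefore edge-disjoint and cover $E(G)$, so I form an auxiliary graph $H$ whose vertices are the triangles of $G$, joining two of them whenever they share a vertex of $G$. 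Each vertex $v$ of $G$ lies on exactly two triangles and hence yields one edge $e_{v}$ of $H$; one checks that $v\mapsto e_{v}$ is a bijection $V(G)\to E(H)$, that $H$ is simple and $3$-regular, and that two vertices of $G$ are adjacent exactly when the corresponding edges of $H$ share a vertex, i.e. $G\cong L(H)$. Also $H$ is triangle-free, since a triangle of $H$ would force some edge of $G$ onto two triangles. Finally, for a cubic graph $H$ it is standard that $L(H)$ is $4$-connected precisely when $H$ is connected with no cyclic edge-cut of size at most three, i.e. when $H$ is cyclically $4$-connected; thus $G$ is the line graph of a cubic cyclically $4$-connected graph.

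\textbf{Case 2: some edge $uv$ lies on two triangles $uvx$ and $uvy$.} Then $x\not\sim y$ (otherwise $G[\{u,v,x,y\}]=K_{4}$), so these four vertices induce a ``diamond''. Writing $N_{G}(u)=\{v,x,y,a\}$ and $N_{G}(v)=\{u,x,y,b\}$, one sees $a\ne b$ and $a\not\sim v$, $b\not\sim u$ (else $uv$ would lie on a third triangle); then the requirement that $ua$ lie on a triangle gives $a\sim x$ or $a\sim y$, and likewise for $b$. Iterating this local analysis, each diamond forces a neighbouring diamond, so the diamonds chain together; $4$-regularity together with $4$-connectivity — which prevents the chain from pinching off a small piece or the vertex set from splitting — then forces the chain to close up into one cyclic pattern. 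Carried out, this produces a cyclic labelling $v_{0},v_{1},\dots,v_{n-1}$ of $V(G)$ with $v_{i}$ adjacent to exactly $v_{i-2},v_{i-1},v_{i+1},v_{i+2}$, that is, $G=C_{n}^{2}$.

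For the converse one checks that every $C_{n}^{2}$ with $n\ge 5$ and every line graph of a cubic cyclically $4$-connected graph is $4$-connected, $4$-regular and has every edge on a triangle, hence is contraction critical by Lemma~\ref{lem1}; this is routine. I expect the main obstacle to be Case 2: upgrading ``each diamond spawns the next'' to a rigorous global conclusion — excluding every way the chain of diamonds could glue to itself prematurely or branch, while invoking $4$-connectivity at precisely the right moments — is where the substantive work lies. A secondary technical point, needed only if one wants Case 1 self-contained, is the equivalence between $4$-connectivity of $L(H)$ and cyclic $4$-connectivity of the cubic graph $H$.
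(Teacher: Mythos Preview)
The paper does not supply its own proof of Lemma~\ref{lem2}: the statement is quoted from Martinov~\cite{Martinov2} and used as a black box, so there is nothing in the paper to compare your argument against. Your task description asked for a comparison, but in this instance the honest answer is that the paper's ``proof'' is simply the citation.

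That said, your sketch follows the standard line of attack and is broadly sound. Reducing via Lemma~\ref{lem1} to $4$-regular graphs with every edge on a triangle, ruling out $K_4$, bounding triangle multiplicities, and then splitting on whether every edge lies on one triangle (line-graph case) or some edge lies on two (squared-cycle case) is exactly how Martinov's argument is organised. Your own assessment of the soft spots is accurate: in Case~2 the passage from ``each diamond forces an adjacent diamond'' to ``the diamonds close up into a single $C_n^2$'' is where the real bookkeeping lives, and you have not yet written it out --- one must track how the chain extends at both ends, show the two ends eventually identify, and use $4$-connectivity to rule out degenerate gluings (for instance, the chain wrapping onto itself with a shift that would create a $3$-cut). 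In Case~1 the equivalence ``$L(H)$ is $4$-connected $\Longleftrightarrow$ the cubic graph $H$ is cyclically $4$-connected'' is not entirely trivial and deserves at least a reference or a paragraph; you also need to note that triangle-freeness of $H$ follows from (and in fact is part of) cyclic $4$-connectivity for cubic graphs on more than four vertices, so that condition need not be listed separately. None of these are errors, just places where a full write-up would require care.
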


\begin{lem}\cite{Wormald}\label{lem3}
The class of all cubic cyclically-4-connected graphs can be generated by repeatedly adding handles starting from $K_{3,3}$ and the cube.
\end{lem}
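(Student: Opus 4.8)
The plan is to prove the nontrivial (``completeness'') direction --- that every cubic cyclically 4-connected graph is obtainable from $K_{3,3}$ and the cube by repeatedly adding handles --- by induction on $|V(G)|$; the soundness direction, that adding a handle to a pair of non-adjacent edges of a cubic cyclically 4-connected graph again produces such a graph, I would dispatch first as a routine check that reversing a handle carries a cyclic edge-cut of size at most $3$ of the new graph to one of the old graph. So let $G$ be cubic and cyclically 4-connected; the goal is to show that $G \cong K_{3,3}$, or $G$ is isomorphic to the cube, or $G$ is obtained from a strictly smaller cubic cyclically 4-connected graph by adding a handle. The inverse of adding a handle is the following operation: pick an edge $e = xy$ of $G$, write $N_{G}(x) = \{y, p, q\}$ and $N_{G}(y) = \{x, r, s\}$, delete $x$ and $y$, and add the two ``link edges'' $pq$ and $rs$; let $G_{e}$ denote the result. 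Call $e$ \emph{reducible} if $\{p, q\} \cap \{r, s\} = \emptyset$, neither $pq$ nor $rs$ is already an edge of $G$, and $G_{e}$ is cyclically 4-connected. When $e$ is reducible, $G_{e}$ is a simple cubic cyclically 4-connected graph on $|V(G)| - 2$ vertices and $G$ is recovered from $G_{e}$ by adding a handle to the non-adjacent edges $pq$ and $rs$. Hence it suffices to show that every cubic cyclically 4-connected $G$ with $|V(G)| \ge 10$ has a reducible edge, the base step being the finite inspection of the cubic cyclically 4-connected graphs on at most $8$ vertices, each of which turns out to be $K_{3,3}$, the cube, or obtainable from $K_{3,3}$ by one handle.

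For the inductive step I would first clear away the ``simplicity'' obstructions. An edge $e = xy$ violates one of the first two defining conditions of reducibility only when a triangle of $G$ passes through $e$ (forcing $\{p,q\} \cap \{r,s\} \ne \emptyset$) or through $x$ or $y$ (forcing $pq \in E(G)$ or $rs \in E(G)$, since then $\{x,p,q\}$ or $\{x,r,s\}$ or $\{y,\dots\}$ is a triangle). But cyclic 4-connectivity keeps $G$ triangle-free: a triangle in a cubic graph on at least six vertices has exactly three edges leaving it and both sides contain a cycle, i.e.\ it is a cyclic $3$-edge-cut. Therefore, once $|V(G)| \ge 10$, \emph{every} edge of $G$ meets the first two conditions, and the only possible obstruction to reducibility of $e$ is that $G_{e}$ has a cyclic edge-cut of size at most $3$.

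Now suppose for contradiction that no edge is reducible, so that for every edge $e$ the graph $G_{e}$ has a cyclic edge-cut of size at most $3$. Among all pairs $(e, S)$ with $S$ such a cut of $G_{e}$, choose one minimizing the order of the smaller side $H$ of $G_{e} - S$. Since $G$ itself is cyclically 4-connected, $S$ is not a cyclic $\le 3$-edge-cut of $G$; but $G$ and $G_{e}$ differ only in the at most four edges at $x, y$ versus the two link edges $pq, rs$, so $S$ must contain at least one link edge, which forces one of $x, y$ to sit next to $S$ in $G$. Reinstating $x$ and $y$ then produces a cyclic $\le 3$-edge-cut of $G$ --- a contradiction --- unless $H$ is so small (a single edge of $G_{e}$ glued to the link structure, i.e.\ a triangle or a $K_{4}^{-}$ meeting the cut) that triangle-freeness of $G_{e}$ is contradicted or $G$ itself has at most $8$ vertices. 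In every case we reach a contradiction, so $G$ has a reducible edge, the induction closes, and with the soundness check this gives the full generation statement.

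The hard part will be exactly this last paragraph: making the extremal choice of $(e, S)$ genuinely work and grinding the resulting structural constraints down to precisely $K_{3,3}$ and the cube. The tension is that avoiding a short cycle through $e$ and avoiding a small cyclic cut created by the reduction pull against each other, since the vertices whose identification spawns a new small cyclic cut of $G_{e}$ are typically those lying on short cycles of $G$. I would organize the casework by how many link edges cross $S$, by whether $S \cap E(G_e)$ is itself a cut of $G$ and of what size, and by which of $x, y$ lands on the small side once $x$ and $y$ are put back; in each branch one either exhibits a genuine cyclic $3$-edge-cut of $G$ or forces $|V(G)| \le 8$. Essentially all of the cubic-graph casework in the proof lives in that branching.
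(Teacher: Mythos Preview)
The paper does not prove this lemma; it is quoted verbatim from Wormald \cite{Wormald} and used as a black box in the final paragraph of the proof of Theorem~\ref{thm5}. There is therefore no ``paper's own proof'' to compare your proposal against.

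That said, your outline is essentially the classical route to this generation theorem (and is, in broad strokes, Wormald's own argument): induct on $|V(G)|$, define the inverse of a handle addition as deleting an edge $xy$ and suppressing its endpoints, and show that in any sufficiently large cubic cyclically $4$-connected graph some edge is \emph{reducible} in your sense. Your observation that cyclic $4$-connectivity forces triangle-freeness once $|V|\ge 6$, and that this disposes of the two ``simplicity'' obstructions to reducibility, is correct and is the standard first reduction.

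Where your sketch is thin is exactly where you say it is. The sentence ``reinstating $x$ and $y$ then produces a cyclic $\le 3$-edge-cut of $G$ --- a contradiction --- unless $H$ is so small\dots'' hides the real case analysis: one must track which of the link edges $pq$, $rs$ lie in $S$, on which side of $S$ each of $p,q,r,s$ falls, and why the minimality of $|H|$ over \emph{all} edges $e$ (not just the chosen one) actually bites. The extremal choice you make does not by itself collapse the small configurations to ``$|V(G)|\le 8$''; Wormald's proof, and the cleaner later treatments, spend most of their length on precisely this branching, and the boundary cases $|V|=8,10$ require explicit attention. Your plan is sound and would succeed if carried out, but the ``grinding'' you defer is not a formality---it is the bulk of the proof.
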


The following lemma is used repeatedly in later proofs.

\begin{lem}\label{lem4}
Let $G$ be a quasi 5-connected graph. If $xy\in E(G)$ and $\delta(G/xy)\geq4$, then $G/xy$ is 4-connected.
\end{lem}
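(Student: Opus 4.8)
The plan is a proof by contradiction. Suppose $G/xy$ is not $4$-connected. Since $\delta(G/xy)\ge 4$ forces $|V(G/xy)|\ge 5$, there is a set $S\subseteq V(G/xy)$ with $|S|\le 3$ such that $(G/xy)-S$ is disconnected.

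The first step is to locate $\overline{xy}$ inside $S$. If $\overline{xy}\notin S$, then $S\subseteq V(G)\setminus\{x,y\}$; since $\overline{xy}$ lies in exactly one component of $(G/xy)-S$, no vertex on the other side is adjacent to $x$ or to $y$ in $G$, and hence $G-S$ is disconnected as well. This makes $S$ a cut of $G$ of size at most $3$, contradicting the $4$-connectivity of $G$. So $\overline{xy}\in S$.

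Now set $S'=(S\setminus\{\overline{xy}\})\cup\{x,y\}$. Then $G-S'=(G/xy)-S$ is disconnected, so $S'$ is a cut of $G$ with $|S'|=|S|+1\le 4$; $4$-connectivity of $G$ then forces $|S'|=4$ and $|S|=3$. At this point I would invoke the hypothesis that $G$ is quasi $5$-connected: the $4$-cut $S'$ cannot be nontrivial, and a one-line case analysis shows this implies that $G-S'$ has an isolated vertex $z$ (if every component of $G-S'$ had at least two vertices, splitting off one of them would exhibit a nontrivial $4$-cut). Consequently $N_G(z)\subseteq S'$ and $d_G(z)\ge\delta(G)\ge 4=|S'|$, so $N_G(z)=S'$; in particular $z$ is adjacent to both $x$ and $y$. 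But then in $G/xy$ the vertex $z$ is adjacent only to $\overline{xy}$ and to the two vertices of $S'\setminus\{x,y\}$, so $d_{G/xy}(z)=3$, contradicting $\delta(G/xy)\ge 4$. This contradiction shows $G/xy$ is $4$-connected.

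The whole argument is bookkeeping about cuts and degrees; the only points needing a moment of care are the first step (verifying that a small cut of $G/xy$ missing $\overline{xy}$ pulls back to a cut of $G$, which follows from the observation that contracting $xy$ cannot merge two components of $(G/xy)-S$ when $\overline{xy}$ lies in only one of them) and the small structural fact that a trivial $4$-cut of a quasi $5$-connected graph always leaves an isolated vertex. Neither is a genuine obstacle, so I do not expect a hard part here.
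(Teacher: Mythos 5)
Your proof is correct and follows essentially the same approach as the paper: lift the small cut of $G/xy$ to a $4$-cut of $G$ containing $\{x,y\}$ and contradict quasi $5$-connectivity. The only stylistic difference is in the order the hypotheses are used: you invoke quasi $5$-connectivity first to produce an isolated vertex $z$ and then use $\delta(G/xy)\geq 4$ to get $d_{G/xy}(z)=3$, whereas the paper applies $\delta(G/xy)\geq 4$ at the outset to conclude every component of $(G/xy)-T'$ has at least two vertices, so the lifted $4$-cut is immediately nontrivial.
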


\begin{proof}
Suppose that $G/xy$ is not 4-connected. Then there exists a 3-cut $T^{\prime}$ of $G/xy$. Since $\delta(G/xy)\geq4$, we see that each component of $G/xy-T^{\prime}$ has at least two vertices. Furthermore, $\overline{xy}\in T^{\prime}$, for otherwise, $T^{\prime}$ is also a 3-cut of $G$, a contradiction. Hence, $T=(T^{\prime}-\{\overline{xy}\})\cup\{x, y\}$ is a 4-cut of $G$. And each component of $G-T$ has at least two vertices. It follows that $T$ is a nontrivial 4-cut of $G$, which contradicts the quasi 5-connectivity of $G$.
\end{proof}

\begin{figure}
  \centering
  \includegraphics{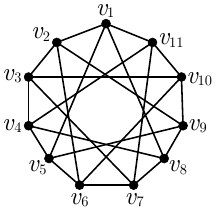}\\
  \caption{The graph $C_{11}^{4}$}\label{fig2}
\end{figure}

Let $C_{11}^{4}$ be a graph as shown in Figure \ref{fig2}. Then $C_{11}^{4}$ is a quasi 5-connected graph. Furthermore, for $i=\{1,2,\dots,11\}$, the edge $v_{i}v_{i+4}$ is quasi 5-contractible, where indices are taken modulo 11. The proof of this fact is straightforward and thus omitted.

\section{The proof of Theorem \ref{lem5}}
In this section, we give a proof of Theorem \ref{lem5}. We first prove several lemmas.

\begin{lem}\label{lem5}
Let $G$ be a contraction critical quasi 5-connected graph that contains neither $K_{4}^{-}$ nor $\overline{P_{5}}$. Let $x\in V_{4}(G)$ with $N_{G}(x)=\{x_{1}, x_{2}, x_{3}, x_{4}\}$ such that $x_{3}x_{4}\in E(G)$. Moreover, for $i=1, 2$, $G/xx_{i}$ is 4-connected. Let $F_{i}$ be a quasi fragment with respect to $xx_{i}$ that contains $x_{j}$, where $\{i, j\}=\{1, 2\}$ and $i\neq j$. Then $G[F_{1}]$ and $G[F_{2}]$ consist of two isolated vertices. Furthermore, for ${a}=F_{2}-\{x_{1}\}$ and ${b}=F_{1}-\{x_{2}\}$, $ab\in E(G)$ holds.
\end{lem}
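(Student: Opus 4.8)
The plan is to manufacture two $5$-cuts from the contraction-criticality of $G$, pin down where $x_3$ and $x_4$ sit relative to them, and then squeeze each $F_i$ down to two non-adjacent vertices. Since $G$ is contraction critical quasi $5$-connected and $G/xx_i$ is $4$-connected, the edge $xx_i$ is not quasi $5$-contractible, so $G/xx_i$ has a nontrivial $4$-cut $T_i'$ with $\overline{xx_i}\in T_i'$; exactly as in the discussion preceding Lemma~\ref{lem4}, $T_i=(T_i'\setminus\{\overline{xx_i}\})\cup\{x,x_i\}$ is a $5$-cut of $G$ whose removal splits $G-T_i$ into two parts of size at least two, of which $F_i$ is the one containing $x_j$ (with $\{i,j\}=\{1,2\}$) and $B_i$ the other; put $S_i=T_i'\setminus\{\overline{xx_i}\}$, so $|S_i|=3$ and $T_i=S_i\cup\{x,x_i\}$. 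Two standing facts will be used repeatedly: $(a)$ since $G$ has no $K_4^-$, every edge of $G$ lies in at most one triangle, so from the triangle $xx_3x_4$ one gets $x_1,x_2\notin N_G(x_3)\cup N_G(x_4)$ and $N_G(x)\cap N_G(x_i)\subseteq\{x_j\}$; $(b)$ since $G/xx_i$ is $4$-connected and $F_i$ is a union of components of $(G/xx_i)-T_i'$, we have $N_{G/xx_i}(F_i)=T_i'$, and pulling this back while using $\{x,x_i\}\subseteq N_G(F_i)$ (which holds because $F_i$ is a quasi fragment with respect to $xx_i$) gives $N_G(F_i)=T_i$; the same holds for $B_i$, since $N_G(B_i)=T_i\setminus\{x\}$ or $T_i\setminus\{x_i\}$ would be a nontrivial $4$-cut of $G$.

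Next I would locate $x_3$ and $x_4$. Because $x\in T_1=N_G(B_1)$, the vertex $x$ has a neighbour in $B_1$; as its neighbours are $x_1\in T_1$, $x_2\in F_1$, $x_3$ and $x_4$, at least one of $x_3,x_4$ lies in $B_1$. Since $x_3x_4\in E(G)$ there is no edge separating them across $T_1$, and putting either of them into $F_1$ would leave $x$ with no neighbour in $B_1$; hence either $\{x_3,x_4\}\subseteq B_1$, or, after possibly swapping the labels $x_3$ and $x_4$, $x_3\in B_1$ and $x_4\in S_1$. In both cases $x_2$ is the unique neighbour of $x$ in $F_1$, and the mirror statements hold for the $T_2$-partition.

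The crux is to prove that $x_2$ has no neighbour in $F_1\setminus\{x_2\}$ (and symmetrically that $x_1$ has no neighbour in $F_2\setminus\{x_1\}$). Assuming the contrary, say $b'\in F_1\setminus\{x_2\}$ with $x_2b'\in E(G)$, I would derive a contradiction by a case analysis on how $b'$ and the remaining neighbours of $x_2$ attach to $T_1$, to the triangle $xx_3x_4$, and to the $T_2$-partition; each case yields a copy of $\overline{P_5}$ (for example the vertices $x,x_2,b',x_3,x_4$ span a $\overline{P_5}$ whenever $x_4\in S_1$ and $b'\sim x_4$, via the path $x-x_2-b'-x_4$ together with the triangle), a copy of $K_4^-$, or a cut of $G$ with at most four vertices and both sides of size at least two --- all impossible. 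Granting the claim, if $|F_1|\geq 3$ then $F_1\setminus\{x_2\}$ has at least two vertices and $N_G(F_1\setminus\{x_2\})\subseteq T_1\setminus\{x\}$ (dropping $x$, whose only neighbour in $F_1$ is $x_2$, and dropping $x_2$ by the claim), a cut of at most four vertices whose complementary side contains $B_1\cup\{x\}$ and hence a nontrivial $4$-cut (or smaller cut) of $G$, a contradiction; therefore $|F_1|=2$, say $F_1=\{x_2,b\}$, and the claim forces $x_2b\notin E(G)$, so $G[F_1]$ consists of two isolated vertices, and likewise $F_2=\{x_1,a\}$ with $x_1a\notin E(G)$. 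Finally, $N_G(b)\subseteq\{x_2\}\cup T_1$ with $b\not\sim x_2$ and $b\not\sim x$, so $\delta(G)\geq 4$ forces $N_G(b)=T_1\setminus\{x\}$, and symmetrically $N_G(a)=T_2\setminus\{x\}$; in particular $a\sim x_2$, so $a\in F_1\cup T_1$, and since $a\neq x_2$ (as $a\in F_2$ and $x_2\in T_2$) and $a\neq b$ (else $T_1=T_2$ while $x_2\in F_1\cap T_2$) we conclude $a\in T_1\setminus\{x\}=N_G(b)$, that is, $ab\in E(G)$.

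The case analysis behind the key claim is the main obstacle: it is the only place where the forbidden subgraph $\overline{P_5}$ is genuinely needed (the $K_4^-$ hypothesis already entered through fact $(a)$), and organizing the argument so that no configuration slips through --- in particular keeping track, across the two fragmentations at once, of which vertices of $T_1$ and of $T_2$ are adjacent to $x_3,x_4$ as opposed to $x_2$ and $b'$ --- is where the real work lies.
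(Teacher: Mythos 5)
Your setup is sound: the two $5$-cuts $T_1,T_2$, the placement of $x_3,x_4$ (at least one in $B_1$, the other in $B_1\cup S_1$), the observation that $N_G(B_i)=T_i$, and the endgame --- given the ``crux'' claim, force $|F_1|=2$ via a too-small cut on $F_1\setminus\{x_2\}$, read off $N_G(b)=T_1\setminus\{x\}$ and $N_G(a)=T_2\setminus\{x\}$ by degree counting since $\delta(G)\geq4$, and conclude $a\in T_1\setminus\{x\}=N_G(b)$ --- are all correct and do yield exactly what the lemma asserts. The trouble is that the proposal never proves the claim on which all of this rests, namely that $x_2$ has no neighbour in $F_1\setminus\{x_2\}$ (equivalently, that $F_1$ does not properly contain a component hanging on $x_2$). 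You verify a single illustrative configuration ($x_4\in S_1$ and $b'\sim x_4$ giving a $\overline{P_5}$ on $\{x,x_2,b',x_3,x_4\}$) and then explicitly defer the rest, writing that organizing the full case analysis ``is where the real work lies.'' That claim is essentially the entire technical content of the lemma, so what you have is a plan, not a proof.

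For comparison, the paper reaches $|F_1|=2$ and $bx_2\notin E(G)$ (and the symmetric statements for $F_2$) by the standard fragment-intersection technique: it sets up the four corner unions $X_1,\dots,X_4$, shows in Claims~1--4 that $|T_1\cap F_2|=|F_1\cap T_2|=|T_1\cap\overline{F_2}|=|\overline{F_1}\cap T_2|=2$ (hence $|T_1\cap T_2|=1$), and then in Claims~5--7 eliminates $F_1\cap F_2$, $F_1\cap\overline{F_2}$ and $\overline{F_1}\cap F_2$ one at a time, each elimination producing a small explicit $K_4^-$ or $\overline{P_5}$. That machinery reduces the problem to a finite list of bounded-size configurations and is where $\overline{P_5}$-freeness really gets used. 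Your ``direct'' route avoids naming those intersections up front, but --- as you yourself observe --- you would still have to track where $b'$ and $x_2$'s other neighbours land with respect to \emph{both} the $T_1$- and the $T_2$-partition, so in practice the same bookkeeping would reappear under different labels. Either carry that case analysis out in full or adopt the intersection framework explicitly; until then the central step is an acknowledged gap.
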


\begin{proof}
For $i=1,2$, let $T_{i}=N_{G}(F_{i})$ and $\overline{F_{i}}=V(G)-(F_{i}\cup T_{i})$.
Clearly, $x\in T_{1}\cap T_{2}$, $x_{1}\in T_{1}\cap F_{2}$, $x_{2}\in F_{1}\cap T_{2}$ and $\{x_{3}, x_{4}\}\subseteq V(G)-F_{1}-F_{2}$.
Let $X_{1}=(T_{1}\cap F_{2})\cup(T_{1}\cap T_{2})\cup(F_{1}\cap T_{2})$, $X_{2}=(T_{1}\cap F_{2})\cup(T_{1}\cap T_{2})\cup(\overline{F_{1}}\cap T_{2})$, $X_{3}=(\overline{F_{1}}\cap T_{2})\cup(T_{1}\cap T_{2})\cup( T_{1}\cap\overline{F_{2}})$ and $X_{4}=(F_{1}\cap T_{2})\cup(T_{1}\cap T_{2})\cup(T_{1}\cap\overline{F_{2}})$.
The edges $x_{1}x_{3}$, $x_{1}x_{4}$, $x_{2}x_{3}$ and $x_{2}x_{4}$ do not exist, because otherwise $G$ would contain a $K_{4}^{-}$, which contradicts the assumption.
If $x_{1}x_{2}\in E(G)$, then for $i=1, 2$, $d_{G}(x_{i})\geq5$, because $G/xx_{i}$ is 4-connected.

\noindent{\bf Claim 1.} $\overline{F_{1}}\cap T_{2}\neq\emptyset$ and $T_{1}\cap\overline{F_{2}}\neq\emptyset$.

\begin{proof}
We only show that $\overline{F_{1}}\cap T_{2}\neq\emptyset$, and the other one can be handled
similarly. Suppose $\overline{F_{1}}\cap T_{2}=\emptyset$. If $\overline{F_{1}}\cap F_{2}=\emptyset$, then $N_{G}(x_{1})\cap\overline{F_{1}}=\emptyset$, which implies that $T_{1}-\{x_{1}\}$ is a nontrivial 4-cut of $G$, a contradiction. So $\overline{F_{1}}\cap F_{2}\neq\emptyset$. It follows that $|X_{2}|\geq5$ since $N_{G}(x)\cap(\overline{F_{1}}\cap F_{2})=\emptyset$. Then $T_{1}\cap\overline{F_{2}}=\emptyset$ and $|X_{2}|=5$, and thus $|\overline{F_{1}}\cap F_{2}|=1$. Since $|\overline{F_{1}}|\geq2$, $\overline{F_{1}}\cap\overline{F_{2}}\neq\emptyset$. Thus, $|X_{3}|\geq4$, and consequently, $|T_{1}\cap F_{2}|=1$ and $|T_{1}\cap T_{2}|=4$. This implies that $|\overline{F_{1}}\cap\overline{F_{2}}|=1$, and the vertex is adjacent to $x$. Let $\overline{F_{1}}\cap F_{2}=\{a\}$. Then we find that $G[\{a, x, x_{1}, x_{3}, x_{4}\}]\cong\overline{P_{5}}$, a contradiction.
\end{proof}

\noindent{\bf Claim 2.} $|T_{1}\cap F_{2}|=|F_{1}\cap T_{2}|$ and $|T_{1}\cap\overline{F_{2}}|=|\overline{F_{1}}\cap T_{2}|$.

\begin{proof}
We only need to show that $|T_{1}\cap F_{2}|=|F_{1}\cap T_{2}|$.
By contradiction, we assume $|T_{1}\cap F_{2}|>|F_{1}\cap T_{2}|$ without loss of generality. Then $|X_{4}|\leq4$. Furthermore, $|F_{1}\cap T_{2}|\leq2$ by Claim 1. Since $N_{G}(x)\cap(F_{1}\cap\overline{F_{2}})=\emptyset$, $F_{1}\cap\overline{F_{2}}=\emptyset$. If $F_{1}\cap T_{2}=\{x_{2}\}$, then $F_{1}\cap F_{2}\neq\emptyset$. Similar to the proof of $|\overline{F_{1}}\cap F_{2}|=1$ in Claim 1, we obtain $|F_{1}\cap F_{2}|=1$. Let $F_{1}\cap F_{2}=\{a\}$. If $x_{1}x_{2}\in E(G)$, then $G[\{a, x, x_{1}, x_{2}\}]\cong K_{4}^{-}$, a contradiction. If $x_{1}x_{2}\notin E(G)$, then $G[\{a, x, x_{1}, x_{2}\}]\cong C_{4}$ and $N_{G}(a)\cap N_{G}(x_{2})\neq\emptyset$, which implies that $G$ has a $\overline{P_{5}}$, a contradiction. So $|F_{1}\cap T_{2}|=2$. Thus, $|T_{1}\cap F_{2}|=3$ and $|T_{1}\cap T_{2}|=|T_{1}\cap\overline{F_{2}}|=1$. Since $|\overline{F_{2}}|\geq2$ and $|X_{3}|=4$, we have $|\overline{F_{1}}\cap\overline{F_{2}}|=1$. Then, similarly, we can find that $G$ has a $\overline{P_{5}}$, a contradiction.
\end{proof}

\noindent{\bf Claim 3.} $|T_{1}\cap F_{2}|=|F_{1}\cap T_{2}|\geq2$.

\begin{proof}
Suppose that $T_{1}\cap F_{2}=\{x_{1}\}$ and $F_{1}\cap T_{2}=\{x_{2}\}$.
If $F_{1}\cap F_{2}=\emptyset$, then $x_{1}x_{2}\in E(G)$ and $\overline{F_{1}}\cap F_{2}\neq\emptyset$. Since $|X_{2}|=5$ and $N_{G}(x)\cap(\overline{F_{1}}\cap F_{2})=\emptyset$, $|\overline{F_{1}}\cap F_{2}|=1$. Note that $d_{G}(x_{1})\geq5$. It follows that $G$ has a $K_{4}^{-}$, a contradiction.
Therefore, $F_{1}\cap F_{2}\neq\emptyset$. Since $N_{G}(x)\cap(F_{1}\cap F_{2})=\emptyset$, $|X_{1}|\geq5$. By Claim 1, we have $|T_{1}\cap T_{2}|=3$ and $|X_{1}|=5$, which implies $|F_{1}\cap F_{2}|=1$. Let $F_{1}\cap F_{2}=\{a\}$ and $T_{1}\cap T_{2}=\{x, a_{1}, a_{2}\}$. If $x_{1}x_{2}\in E(G)$, then $G[\{a, x, x_{1}, x_{2}\}]\cong K_{4}^{-}$, a contradiction. So $x_{1}x_{2}\notin E(G)$. Furthermore, we see that $N_{G}(a)\cap N_{G}(x_{2})=\emptyset$, for otherwise, $G$ has a $\overline{P_{5}}$ since $G[\{a, x, x_{1}, x_{2}\}]\cong C_{4}$. This implies $F_{1}\cap\overline{F_{2}}\neq\emptyset$. Since $|X_{4}|=5$ and $N_{G}(x)\cap(F_{1}\cap\overline{F_{2}})=\emptyset$, $|F_{1}\cap\overline{F_{2}}|=1$. Let $F_{1}\cap\overline{F_{2}}=\{b\}$. Then we see that $G[\{a, x_{2}, b, a_{1}\}]\cong C_{4}$ and $N_{G}(b)\cap N_{G}(x_{2})\neq\emptyset$. It follows that $G$ has a $\overline{P_{5}}$, a contradiction.
\end{proof}

\noindent{\bf Claim 4.} $|T_{1}\cap\overline{F_{2}}|=|\overline{F_{1}}\cap T_{2}|\geq2$.

\begin{proof}
Suppose $|T_{1}\cap\overline{F_{2}}|=|\overline{F_{1}}\cap T_{2}|=1$. If $\overline{F_{1}}\cap\overline{F_{2}}=\emptyset$, then $(T_{1}\cap\overline{F_{2}})\cup(\overline{F_{1}}\cap T_{2})=\{x_{3}, x_{4}\}$ and $|F_{1}\cap\overline{F_{2}}|=1$. Let $F_{1}\cap\overline{F_{2}}=\{b\}$. Then we see that $G[\{b, x, x_{2}, x_{3}, x_{4}\}]\cong\overline{P_{5}}$, a contradiction. So $\overline{F_{1}}\cap\overline{F_{2}}\neq\emptyset$. Then $|T_{1}\cap T_{2}|\geq2$. By Claim 3, $|T_{1}\cap T_{2}|=|T_{1}\cap F_{2}|=|F_{1}\cap T_{2}|=2$. It follows that $|\overline{F_{1}}\cap\overline{F_{2}}|=1$ and $|\overline{F_{1}}\cap F_{2}|\leq1$.
Regardless of whether $\overline{F_{1}}\cap F_{2}=\emptyset$ or $|\overline{F_{1}}\cap F_{2}|=1$, we can always find a $\overline{P_{5}}$. This proves Claim 4.
\end{proof}

By Claims 3 and 4, $|T_{1}\cap F_{2}|=|F_{1}\cap T_{2}|=|T_{1}\cap\overline{F_{2}}|=|\overline{F_{1}}\cap T_{2}|=2$. Let $T_{1}\cap F_{2}=\{a, x_{1}\}$, $F_{1}\cap T_{2}=\{b, x_{2}\}$, $T_{1}\cap\overline{F_{2}}=\{b_{1}, b_{2}\}$ and $\overline{F_{1}}\cap T_{2}=\{a_{1}, a_{2}\}$. Note that
$|F_{1}\cap F_{2}|\leq1$, $|\overline{F_{1}}\cap F_{2}|\leq1$ and $|F_{1}\cap\overline{F_{2}}|\leq1$.

\noindent{\bf Claim 5.} $F_{1}\cap F_{2}=\emptyset$.

\begin{proof}
Suppose $|F_{1}\cap F_{2}|=1$. Let $F_{1}\cap F_{2}=\{u\}$. Clearly, $x_{1}x_{2}\notin E(G)$, $bx_{1}\notin E(G)$, $bx_{2}\notin E(G)$ and $ax_{2}\notin E(G)$. If $F_{1}\cap\overline{F_{2}}=\emptyset$, then $N_{G}(b)=\{u, a, b_{1}, b_{2}\}$ and $N_{G}(x_{2})=\{u, x, b_{1}, b_{2}\}$, which implies that $G[\{u, b, b_{1}, x_{2}, a\}]\cong\overline{P_{5}}$, a contradiction. Therefore, $|F_{1}\cap\overline{F_{2}}|=1$. Let $F_{1}\cap\overline{F_{2}}=\{v\}$. Then $N_{G}(v)=\{b, x_{2}, b_{1}, b_{2}\}$. Without loss of generality, we assume $bb_{1}\in E(G)$. Then we see that $G[\{u, b, b_{1}, x_{2}, v\}]$ contains a $\overline{P_{5}}$, a contradiction.
\end{proof}

\noindent{\bf Claim 6.} $ax_{1}\notin E(G)$ and $bx_{2}\notin E(G)$.

\begin{proof}
We only show that $bx_{2}\notin E(G)$. Suppose $bx_{2}\in E(G)$. If $|F_{1}\cap\overline{F_{2}}|=1$, let $F_{1}\cap\overline{F_{2}}=\{v\}$. Then $N_{G}(v)=\{b, x_{2}, b_{1}, b_{2}\}$. Since $d_{G}(b)\geq4$, Claim 5 assures us that $bx_{1}\in E(G)$ or $bb_{i}\in E(G)$ for $i=1,2$, which implies that $G$ has either $K_{4}^{-}$ or $\overline{P_{5}}$, a contradiction. So $F_{1}\cap\overline{F_{2}}=\emptyset$. If $x_{1}x_{2}\notin E(G)$, then $\{ax_{2}, bx_{1}\}\subset E(G)$ by Claim 5. Thus, $ab\notin E(G)$. It follows that $N_{G}(b)=\{x_{1}, x_{2}, b_{1}, b_{2}\}$ and $x_{2}b_{i}\in E(G)$ for $i=1,2$. Then we see that $G$ has a $\overline{P_{5}}$, a contradiction. If $x_{1}x_{2}\in E(G)$, then $bx_{1}\notin E(G)$. It follows $N_{G}(b)=\{a, x_{2}, b_{1}, b_{2}\}$. Since $N_{G}(x_{2})\cap\overline{F_{2}}\neq\emptyset$, we may assume $x_{2}b_{1}\in E(G)$ without loss of generality. Since $d_{G}(x_{2})\geq5$, $x_{2}a\in E(G)$ or $x_{2}b_{2}\in E(G)$. No matter the case, $G$ always contains a $K_{4}^{-}$, which is a contradiction.
This proves Claim 6.
\end{proof}

\noindent{\bf Claim 7.} $\overline{F_{1}}\cap F_{2}=\emptyset$ and $F_{1}\cap\overline{F_{2}}=\emptyset$.

\begin{proof}
We only show that $F_{1}\cap\overline{F_{2}}=\emptyset$. Suppose $|F_{1}\cap\overline{F_{2}}|=1$. Let $F_{1}\cap\overline{F_{2}}=\{v\}$. If $\{bb_{1}, bb_{2}\}\subset E(G)$, then $G$ has a $K_{4}^{-}$, a contradiction. Hence, we may assume $N_{G}(b)=\{x_{1}, v, a, b_{1}\}$  without loss of generality. Then we see that $abvx_{2}$ is a cycle of length 4 if $x_{1}x_{2}\notin E(G)$, $x_{1}bvx_{2}$ is a cycle of length 4 if $x_{1}x_{2}\in E(G)$. Furthermore, $b_{1}\in N_{G}(b)\cap N_{G}(v)$, which implies that $G$ has a $\overline{P_{5}}$, a contradiction.
\end{proof}

This completes the proof of Lemma \ref{lem5}.
\end{proof}

By Lemma \ref{lem5}, we have the following lemma.

\begin{lem}\label{lem6}
Let $G$ be a quasi 5-connected graph that contains neither $K_{4}^{-}$ nor $\overline{P_{5}}$. Let $x\in V_{4}(G)$ such that $N_{G}(x)\cong 2K_{1}\cup K_{2}$ or $N_{G}(x)\cong 2K_{2}$. Furthermore, if $N_{G}(x)\cong 2K_{2}$, $N_{G}(x)$ contains two adjacent vertices of degree greater than 4. Then $G$ has a quasi 5-contractible edge.
\end{lem}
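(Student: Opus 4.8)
Suppose for contradiction that $G$ has no quasi $5$-contractible edge; then $G$ is contraction critical quasi $5$-connected and Lemma~\ref{lem5} applies. Write $N_G(x)=\{x_1,x_2,x_3,x_4\}$ with $x_3x_4\in E(G)$, and, in the case $N_G(x)\cong 2K_2$, assume in addition that $x_1x_2\in E(G)$ with $d_G(x_1),d_G(x_2)\ge 5$ (the content of the extra hypothesis). First I would check that $G/xx_1$ and $G/xx_2$ are $4$-connected. When $N_G(x)\cong 2K_1\cup K_2$, the vertices $x$ and $x_i$ have no common neighbour ($i=1,2$), so no vertex loses degree and $\delta(G/xx_i)\ge 4$; when $N_G(x)\cong 2K_2$, the unique common neighbour of $x$ and $x_1$ (resp.\ $x_2$) is $x_2$ (resp.\ $x_1$), of degree at least $5$, so again $\delta(G/xx_i)\ge 4$. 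Hence $G/xx_i$ is $4$-connected by Lemma~\ref{lem4}, and, $G$ being contraction critical, it is not quasi $5$-connected, i.e.\ $xx_i\in E_0$. Therefore Lemma~\ref{lem5} is applicable and yields quasi fragments $F_1=\{x_2,b\}$ and $F_2=\{x_1,a\}$, each inducing two isolated vertices, with $ab\in E(G)$.

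I would then read off the forced local structure. With $T_i=N_G(F_i)$ one has $|T_i|=5$ (a smaller $N_G(F_i)$ would be a nontrivial $4$-cut), and, as extracted in the proof of Lemma~\ref{lem5}, $T_1\cap T_2=\{x\}$, $T_1=\{x,x_1,a,b_1,b_2\}$, $T_2=\{x,x_2,b,a_1,a_2\}$, where $\{b_1,b_2\}=T_1\cap\overline{F_2}$ and $\{a_1,a_2\}=\overline{F_1}\cap T_2$. Since $a\in F_2$, $N_G(a)\subseteq\{x_1\}\cup T_2$, and as $a$ is adjacent neither to $x$ (for $a\notin N_G(x)$) nor to $x_1$ (for $G[F_2]$ is edgeless), we get $N_G(a)\subseteq\{b,x_2,a_1,a_2\}$; since $d_G(a)\ge 4$ this is an equality and $d_G(a)=4$, so in particular $ax_2\in E(G)$. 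Symmetrically $d_G(b)=4$, $N_G(b)=\{a,x_1,b_1,b_2\}$, and $bx_1\in E(G)$. If $N_G(x)\cong 2K_2$ we are done, since then $G[\{x,x_1,x_2,a,b\}]$ has edge set exactly $\{xx_1,\,xx_2,\,x_1x_2,\,x_1b,\,x_2a,\,ab\}$, which is a copy of $\overline{P_5}$ (degree sequence $(3,3,2,2,2)$, its two degree-$3$ vertices adjacent) --- contradicting the hypothesis.

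So assume $N_G(x)\cong 2K_1\cup K_2$, i.e.\ $x_1x_2\notin E(G)$. The same count at $x_1$ and $x_2$, using additionally that $x_1\not\sim x_2$, gives $d_G(x_1)=d_G(x_2)=4$ with $N_G(x_1)=\{x,b,a_1,a_2\}$ and $N_G(x_2)=\{x,a,b_1,b_2\}$. Furthermore $a_1a_2\notin E(G)$ and $b_1b_2\notin E(G)$, for otherwise $G[\{a,x_1,a_1,a_2\}]$ or $G[\{b,x_2,b_1,b_2\}]$ would be a $K_4^-$; hence $N_G(x_1)$, $N_G(x_2)$, $N_G(a)$, $N_G(b)$ are all independent sets and $\{x_3,x_4\}\cap\{a_1,a_2,b_1,b_2\}=\emptyset$ (a coincidence there would make $x_1$ or $x_2$ adjacent to $x_3$ or $x_4$). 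Thus $G[\{x,x_1,x_2,a,b\}]$ is only the chordless $5$-cycle $x\,x_1\,b\,a\,x_2\,x$, and no copy of $K_4^-$ or $\overline{P_5}$ is visible among the vertices $x,x_1,x_2,a,b,a_1,a_2,b_1,b_2$: this is the main obstacle, and the contradiction must come from contractibility rather than from a forbidden subgraph living on these few vertices.

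To obtain it I would contract the ``inner'' edges. The edge $ab$ lies in no triangle, since $N_G(a)\cap N_G(b)=\emptyset$; thus $a$ and $b$ have no common neighbour, $\delta(G/ab)\ge 4$, and $G/ab$ is $4$-connected by Lemma~\ref{lem4} --- and likewise $G/aa_1$, $G/x_1b$ and several further such edges. As $G$ is contraction critical, none of these is quasi $5$-connected, so each lies in $E_0$ and determines a nontrivial $5$-cut of $G$ through its two ends. The remaining step is combinatorial bookkeeping: fixing such a nontrivial $5$-cut $S\supseteq\{a,b\}$, with sides $W_1,W_2$ of size at least $2$, one uses that every vertex of the $5$-cycle $x\,x_1\,b\,a\,x_2\,x$ has degree $4$ with the neighbourhoods found above to pin down which of $W_1,W_2,S$ contains each of $x,x_1,x_2,a_1,a_2,b_1,b_2$, and shows that in every case one either contradicts $|S|=5$ or the nontriviality of the two sides, or else forces $a_1,a_2$ (resp.\ $b_1,b_2$) to become false twins whose common neighbourhood of size at least $4$ lies inside $S$ --- which, together with the edges already in place, produces a $K_4^-$ or a $\overline{P_5}$ on five of the listed vertices. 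Completing this analysis is the heart of the proof, and it is precisely where the case $N_G(x)\cong 2K_1\cup K_2$ (unlike the immediate case $N_G(x)\cong 2K_2$) requires genuine work.
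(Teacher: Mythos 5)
Your first two paragraphs are correct and essentially coincide with the paper: after verifying $\delta(G/xx_i)\ge 4$ (so Lemma~\ref{lem4} gives $4$-connectivity), Lemma~\ref{lem5} yields $F_1=\{x_2,b\}$, $F_2=\{x_1,a\}$, $ab\in E(G)$, $ax_1,bx_2\notin E(G)$; reading off $T_1=\{x,x_1,a,b_1,b_2\}$, $T_2=\{x,x_2,b,a_1,a_2\}$ from Lemma~\ref{lem5}'s proof and counting degrees does force $ax_2,bx_1\in E(G)$ with $d_G(a)=d_G(b)=4$; and in the $2K_2$ case the induced subgraph on $\{x,x_1,x_2,a,b\}$ is indeed $\overline{P_5}$, which is exactly the paper's contradiction. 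Your local structure analysis for the $2K_1\cup K_2$ case ($d_G(x_1)=d_G(x_2)=4$, the independence of $N_G(a),N_G(b),N_G(x_1),N_G(x_2)$, the chordless $5$-cycle $x\,x_1\,b\,a\,x_2\,x$, the disjointness $\{x_3,x_4\}\cap\{a_1,a_2,b_1,b_2\}=\emptyset$) is also correct.

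However, there is a genuine gap: the case $N_G(x)\cong 2K_1\cup K_2$ --- which you yourself label ``the heart of the proof'' --- is never actually proved. You correctly recognize that the contradiction must now come from contracting a further edge (you propose $ab$, $aa_1$, $x_1b$), and you correctly observe that each such contraction is $4$-connected by Lemma~\ref{lem4} and hence yields a nontrivial $5$-cut through the two endpoints. But the ensuing ``combinatorial bookkeeping'' is only asserted, not carried out, and the hypothesized terminus (forcing $a_1,a_2$, resp.\ $b_1,b_2$, to be false twins with a common neighbourhood inside $S$) is speculative --- it is not what actually happens. The paper closes this case by contracting $ax_2$: letting $F_3$ be a quasi fragment with respect to $ax_2$, it first shows $x_1\in T_3$ and $x\notin T_3$, places $b$ on the side opposite $x$, uses the known degree-$4$ neighbourhoods to bound $|F_3|\le 3$, and then derives $K_4^-$ on $\{x,x_3,x_4,a_1\}$ when $|F_3|=2$, and $\overline{P_5}$ on $\{a_1,x_3,x,x_2,t\}$ when $|F_3|=3$. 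Until that case analysis (or a completed analogue for your choice of edge) is supplied, the proof of the lemma is not done.
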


\begin{proof}
Assume, to the contrary, that $G$ has no quasi 5-contractible edges. Let $N_{G}(x)=\{x_{1}, x_{2}, x_{3}, x_{4}\}$. If $N_{G}(x)\cong 2K_{1}\cup K_{2}$, then let $x_{3}x_{4}\in E(G)$. If $N_{G}(x)\cong 2K_{2}$, then let $\{x_{1}x_{2}, x_{3}x_{4}\}\subset E(G)$, $d_{G}(x_{1})\geq5$, and $d_{G}(x_{2})\geq5$. For $i=1, 2$, $G/xx_{i}$ is 4-connected by Lemma \ref{lem4}. Then let $F_{i}$ be a quasi fragment with respect to $xx_{i}$, $T_{i}=N_{G}(F_{i})$ and $\overline{F_{i}}=V(G)-(F_{i}\cup T_{i})$. Without loss of generality, assume that $x_{2}\in F_{1}$ and $x_{1}\in F_{2}$. By lemma \ref{lem5}, $|F_{1}|=|F_{2}|=2$. Let $F_{1}=\{x_{2}, b\}$ and let $F_{2}=\{x_{1}, a\}$. Lemma \ref{lem5} assures us that $bx_{2}\notin E(G)$, $ax_{1}\notin E(G)$, and $ab\in E(G)$. If $N_{G}(x)\cong 2K_{2}$, then $G[\{x, x_{1}, x_{2}, a, b\}]\cong\overline{P_{5}}$, a contradiction. So $N_{G}(x)\cong 2K_{1}\cup K_{2}$.

Note that $N_{G}(a)\cong 4K_{1}$. Otherwise, $G$ contains either $K_{4}^{-}$ or $\overline{P_{5}}$, a contradiction. This implies that $G/ax_{2}$ is 4-connected by Lemma \ref{lem4}. Let $F_{3}$ be a quasi fragment with respect to $ax_{2}$. Let $T_{3}=N_{G}(F_{3})$ and $\overline{F_{3}}=V(G)-(F_{3}\cup T_{3})$. Clearly, $x_{1}\in T_{3}$. It follows that $x\notin T_{3}$. Otherwise, either $N_{G}(x)\cap F_{3}=\emptyset$ or $N_{G}(x)\cap\overline{F_{3}}=\emptyset$. In either case, this would imply that $T_{3}-\{x\}$ is a nontrivial 4-cut of $G$, a contradiction. Without loss of generality, we assume $x\in F_{3}$. If $|N_{G}(a)\cap\overline{F_{3}}|=1$, then $(T_{3}-\{a, x_{1}\})\cup(N_{G}(a)\cap\overline{F_{3}})$ is a 4-cut of $G$, and thus, $|\overline{F_{3}}|=2$. However, it is clear that $G$ contains a $\overline{P_{5}}$. So $|N_{G}(a)\cap\overline{F_{3}}|=2$ and $|N_{G}(a)\cap F_{3}|=1$. If $b\in F_{3}$, then $N_{G}(x_{2})\cap\overline{F_{3}}=\emptyset$, a contradiction. So $b\in\overline{F_{3}}$.

Let $N_{G}(a)=\{x_{2}, b, a_{1}, a_{2}\}$. Without loss of generality, assume that $a_{1}\in F_{3}$ and $a_{2}\in\overline{F_{3}}$. Then we see that $(N_{G}(a)\cup N_{G}(x_{1})\cup N_{G}(x_{2}))\cap F_{3}=\{a_{1}, x\}$. It follows $|F_{3}|\leq3$. If $F_{3}=\{a_{1}, x\}$, then $T_{3}=\{a, x_{1}, x_{2}, x_{3}, x_{4}\}$. Hence, $\{a_{1}x_{3}, a_{1}x_{4}\}\subseteq E(G)$, implying $G[\{x, x_{3}, x_{4}, a_{1}\}]\cong K_{4}^{-}$, a contradiction. Therefore, $|F_{3}|=3$. Without loss of generality, we may assume $F_{3}=\{a_{1}, x, x_{3}\}$. Consequently, $x_{4}\in T_{3}$. Let $\{t\}=T_{3}-\{a, x_{1}, x_{2}, x_{4}\}$. If $a_{1}x_{4}\in E(G)$, then $G[\{x, x_{3}, x_{4}, a_{1}\}]\cong K_{4}^{-}$, leading to a contradiction. So $a_{1}t\in E(G)$. However, this implies that $G[\{a_{1}, x_{3}, x, x_{2}, t\}]\cong\overline{P_{5}}$, which is also a contradiction.
\end{proof}

\begin{lem}\label{lem7}
Let $G$ be a contraction critical quasi 5-connected graph that contains neither $K_{4}^{-}$ nor $\overline{P_{5}}$. Let $x\in V_{4}(G)$ such that $N_{G}(x)\cong 4K_{1}$. Let $F_{1}$ be a quasi atom with respect to $E(x)$. Then,\\
{\rm(i)} $G[F_{1}]$ consists of two isolated vertices, each of degree four;\\
{\rm(ii)} If $u$ is a common neighbor of two vertices in $F_{1}$, then $|N_{G}(u)\cap N_{G}(x)|\leq2$.
\end{lem}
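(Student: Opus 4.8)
The plan is to mimic the structure of Lemmas \ref{lem5} and \ref{lem6}: since $G$ is contraction critical, every edge of $E(x)$ is non-quasi-5-contractible, so for each $x_i\in N_G(x)$ the graph $G/xx_i$ is either not $4$-connected or has small minimum degree. First I would observe that $\delta(G)\geq 4$ (quasi-5-connectivity), and because $N_G(x)\cong 4K_1$, contracting $xx_i$ keeps the minimum degree at least $4$ unless some $x_j$ has degree $4$ and is adjacent to $x_i$ — but $N_G(x)$ being independent rules out edges among the $x_j$, so in fact $\delta(G/xx_i)\geq 4$ for every $i$, hence $G/xx_i$ is $4$-connected by Lemma \ref{lem4}. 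Thus each $xx_i$ genuinely has a quasi fragment, and I can speak of a quasi atom $F_1$ with respect to $E(x)$.

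For part (i), let $T_1=N_G(F_1)$ and let $xx_i$ be an edge witnessing $F_1$, so $x\in T_1$ and some $x_j\in F_1$. The key minimality argument is the standard atom trick: suppose $|F_1|\geq 3$. Then I would look at a vertex $w\in F_1$ of smallest degree inside $G[F_1]$, or more precisely argue via the $4$-cut $T_1$ and the fact that $x\in T_1$ has all its other neighbors outside $F_1\cup\{$the relevant vertex$\}$. Since $|T_1|=4$ and $x\in T_1$, counting edges from $F_1$ to $T_1$ and using $\delta(G)\geq 4$ forces $|F_1|$ small. If $|F_1|\geq 3$ I expect to produce a $4$-cut separating a proper nonempty subset of $F_1$ from the rest — contradicting either the minimality of $F_1$ as a quasi atom or the quasi-5-connectivity (a nontrivial $4$-cut). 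Hence $|F_1|=2$, say $F_1=\{a,b\}$. Then $a$ and $b$ each have $\geq 4$ neighbors, all inside $F_1\cup T_1$, which has $6$ vertices; if $ab\in E(G)$ then $a,b$ together with two common $T_1$-neighbors would give a $K_4^-$ (or the independence/$\overline{P_5}$ conditions would be violated), so $ab\notin E(G)$; thus $N_G(a),N_G(b)\subseteq T_1$ with $|T_1|=4$, forcing $d_G(a)=d_G(b)=4$ and $N_G(a)=N_G(b)=T_1$. That gives two isolated vertices of degree four, establishing (i).

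For part (ii), suppose $u$ is adjacent to both $a$ and $b$ (the two vertices of $F_1$), and suppose for contradiction $|N_G(u)\cap N_G(x)|\geq 3$. Since $N_G(a)=N_G(b)=T_1=\{x,x_i,x_j,u'\}$ for appropriate labels — wait, more carefully: $u\in T_1$ and $N_G(a)=N_G(b)=T_1$. If $u$ has three neighbors in $N_G(x)=\{x_1,x_2,x_3,x_4\}$, I would examine the induced subgraph on $\{x\}\cup(N_G(u)\cap N_G(x))\cup\{a\}$ or on $\{a,b,u\}$ together with these common neighbors. Because $\{a,b\}$ have identical neighborhoods $T_1\ni u$, and $u$ sees three of the $x_i$, one can locate a $C_4$ (through $x$ and two $x_i$'s and $u$, or through $a,b$ and two $T_1$-vertices) whose vertices have a further common neighbor, yielding a $\overline{P_5}$; alternatively a triangle among $x$-neighbors created by $u$ combined with edges to $a$ gives a $K_4^-$. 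The main obstacle is the careful case analysis in (ii): one must track exactly which vertices of $T_1$ coincide with which $x_i$, handle the overlap between $T_1$ and $N_G(x)$, and in each configuration exhibit the forbidden subgraph explicitly — this is the same delicate bookkeeping as in Claims 5–7 of Lemma \ref{lem5}, and I would organize it by the size of $T_1\cap N_G(x)$.
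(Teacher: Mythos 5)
Your argument breaks down at the very first counting step, and the error propagates through both parts. A quasi fragment in a quasi $5$-connected graph arises from a $5$-cut, not a $4$-cut: by the paper's definition, $T=(T'-\overline{xy})\cup\{x,y\}$ has $|T|=5$, so $|T_1|=5$ and $|F_1\cup T_1|=7$ when $|F_1|=2$. Your entire part~(i) argument hinges on $|T_1|=4$, which you then use to conclude $N_G(a)=N_G(b)=T_1$. That conclusion is false on two counts: $|T_1|=5$ so it cannot coincide with both $4$-element neighborhoods, and in fact the two vertices of $F_1$ do not have equal neighborhoods. In the paper's analysis, with $F_1=\{b,x_2\}$, one gets $N_G(b)=\{x_1,a,b_1,b_2\}$ and $N_G(x_2)=\{a,x,b_1,b_2\}$; they share only three neighbors (one of $F_1$'s vertices is adjacent to $x$, the other is not, since $N_G(x)$ is prescribed). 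Because of this, a purely local counting trick does not force $|F_1|=2$; the paper instead introduces a second quasi fragment $F_2$ with respect to $xx_2$ and carries out a delicate partition analysis (Claims~1--5, using the sets $X_1,\dots,X_4$) to pin down the structure of $F_1$. You would need some analogue of that two-fragment interaction; the standard atom/submodularity counting alone, with a mis-sized cut, will not produce the contradiction you sketch.

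Part~(ii) is built on the false premise $N_G(a)=N_G(b)=T_1$ and so does not stand as written. More substantively, you have not identified the actual mechanism the paper uses: after establishing that $N_G(b)=\{x_1,a,b_1,b_2\}$ and $N_G(x_2)=\{a,x,b_1,b_2\}$, it argues that the remaining two neighbors of $x$ (say $x_3,x_4$) cannot both lie on the $\overline{F_1}\cap\overline{F_2}$ side; if $x_3\notin\overline{F_1}\cap F_2$ one is forced into the exceptional configuration $C_{11}^4$, where quasi $5$-contractible edges do exist, contradicting contraction criticality. Your sketch of ``a $C_4$ plus a common neighbor gives $\overline{P_5}$'' captures a tool that appears repeatedly in the paper's case analysis, but it does not by itself replace the second-fragment argument or account for the $C_{11}^4$ case.
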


\begin{proof}
Let $N_{G}(x)=\{x_{1}, x_{2}, x_{3}, x_{4}\}$. Let $T_{1}=N_{G}(F_{1})$ and let $\overline{F_{1}}=V(G)-(F_{1}\cup T_{1})$. Without loss of generality, we assume that $F_{1}$ is a quasi fragment with respect to $xx_{1}$ and $x_{2}\in F_{1}$. Let $F_{2}$ be a quasi fragment with respect to $xx_{2}$ and let $T_{2}=N_{G}(F_{1})$, $\overline{F_{2}}=V(G)-(F_{2}\cup T_{2})$.
Clearly, $x\in T_{1}\cap T_{2}$ and $x_{2}\in F_{1}\cap T_{2}$.
Let $X_{1}=(T_{1}\cap F_{2})\cup(T_{1}\cap T_{2})\cup(F_{1}\cap T_{2})$, $X_{2}=(T_{1}\cap F_{2})\cup(T_{1}\cap T_{2})\cup(\overline{F_{1}}\cap T_{2})$, $X_{3}=(\overline{F_{1}}\cap T_{2})\cup(T_{1}\cap T_{2})\cup( T_{1}\cap\overline{F_{2}})$, $X_{4}=(F_{1}\cap T_{2})\cup(T_{1}\cap T_{2})\cup(T_{1}\cap\overline{F_{2}})$.

\noindent{\bf Claim 1.} {\rm(i)} If $F_{1}\cap F_{2}\neq\emptyset$, then $\overline{F_{1}}\cap\overline{F_{2}}\neq\emptyset$ and $|F_{1}\cap F_{2}|=1$;\\
\indent~~~~~~~~~~{\rm(ii)} If $F_{1}\cap\overline{ F_{2}}\neq\emptyset$, then $\overline{F_{1}}\cap F_{2}\neq\emptyset$ and $|F_{1}\cap\overline{F_{2}}|=1$.

\begin{proof}
We only prove that {\rm(i)} holds. If $F_{1}\cap F_{2}\neq\emptyset$, then $|X_{1}|\geq5$. It follows $|F_{1}\cap T_{2}|\geq|T_{1}\cap\overline{F_{2}}|$. If $\overline{F_{1}}\cap\overline{F_{2}}=\emptyset$, then $|\overline{F_{2}}|<|F_{1}|$, a contradiction. So $\overline{F_{1}}\cap\overline{F_{2}}\neq\emptyset$. This implies $|X_{3}|\geq4$.
If $|F_{1}\cap F_{2}|\geq2$, then $|X_{1}|\geq6$. Otherwise, $F_{1}\cap F_{2}$ is a quasi fragment with respect to $xx_{2}$ and $|F_{1}\cap F_{2}|<|F_{1}|$, which contradicts the choice of $F_{1}$. Hence, $|X_{1}|=6$ and $|X_{3}|=4$. So $|\overline{F_{1}}\cap\overline{F_{2}}|=1$, implying $|\overline{F_{2}}|<|F_{1}|$, a contradiction. So $|F_{1}\cap F_{2}|=1$.
\end{proof}

\noindent{\bf Claim 2.} $|F_{1}\cap T_{2}|\geq2$.

\begin{proof}
Suppose $F_{1}\cap T_{2}=\{x_{2}\}$. If $F_{1}\cap F_{2}=\emptyset$, then $F_{1}\cap\overline{F_{2}}\neq\emptyset$ and $T_{1}\cap F_{2}\neq\emptyset$. By Claim 1, $|F_{1}\cap\overline{F_{2}}|=1$. This implies $|X_{4}|\geq5$, and thus $|T_{1}\cap F_{2}|=1$. Let $T_{1}\cap F_{2}=\{a\}$ and $F_{1}\cap\overline{F_{2}}=\{b\}$. Then $a\neq x_{1}$ since $ax_{2}\in E(G)$. Therefore, $G[\{b, x, x_{1}, x_{2}\}]\cong C_{4}$ and $N_{G}(b)\cap N_{G}(x_{2})\neq\emptyset$. This implies that $G$ has a $\overline{P_{5}}$, a contradiction. So $F_{1}\cap F_{2}\neq\emptyset$.
Similarly, we have $F_{1}\cap\overline{F_{2}}\neq\emptyset$. By Claim 1, $|F_{1}\cap F_{2}|=|F_{1}\cap\overline{F_{2}}|=1$. Let $F_{1}\cap F_{2}=\{a\}$ and $F_{1}\cap\overline{F_{2}}=\{b\}$.
Note that $G[F_{1}]$ is connected by the choice of $F_{1}$.

If $T_{1}\cap F_{2}=\emptyset$, then $|\overline{F_{1}}\cap F_{2}|\leq1$, which implies $|F_{2}|<|F_{1}|$, a contradiction. So $T_{1}\cap F_{2}\neq\emptyset$. If $|T_{1}\cap F_{2}|\geq2$, then $|X_{4}|\leq4$, which is impossible. Therefore, $|T_{1}\cap F_{2}|=1$. Similarly, $|T_{1}\cap\overline{F_{2}}|=1$. Thus, $|T_{1}\cap T_{2}|=3$. Let $T_{1}\cap T_{2}=\{x, c_{1}, c_{2}\}$. Then $ax_{2}bc_{1}$ and $ax_{2}bc_{2}$ are cycles of length four. Moreover, $N_{G}(a)\cap N_{G}(x_{2})\neq\emptyset$ or $N_{G}(b)\cap N_{G}(x_{2})\neq\emptyset$. Consequently, $G$ has either $K_{4}^{-}$ or $\overline{P_{5}}$ as a subgraph, a contradiction.
\end{proof}

\noindent{\bf Claim 3.} $|F_{1}\cap T_{2}|=2$.

\begin{proof}
Suppose $|F_{1}\cap T_{2}|\neq2$. By Claim 2, it follows that $|F_{1}\cap T_{2}|\geq3$. Without loss of generality, assume $x_{1}\in T_{1}\cap F_{2}$. If $|T_{1}\cap F_{2}|\leq2$, then $|X_{2}|\leq4$, which implies $\overline{F_{1}}\cap F_{2}=\emptyset$. Consequently, $|F_{2}|<|F_{1}|$, a contradiction. Therefore, $|T_{1}\cap F_{2}|\geq3$. It follows that $|T_{1}\cap\overline{F_{2}}|\leq1$. Then $|X_{3}|\leq3$, and thus $\overline{F_{1}}\cap\overline{F_{2}}=\emptyset$. This implies $|\overline{F_{2}}|<|F_{1}|$, a contradiction.
\end{proof}

\noindent{\bf Claim 4.} $|T_{1}\cap F_{2}|=|T_{1}\cap\overline{F_{2}}|=2$.

\begin{proof}
we first prove that $|T_{1}\cap F_{2}|\geq2$. Assume, for contradiction, that $|T_{1}\cap F_{2}|\leq1$.
If $\overline{F_{1}}\cap F_{2}=\emptyset$, then $|F_{2}|<|F_{1}|$ by Claim 3, a contradiction. Thus, $\overline{F_{1}}\cap F_{2}\neq\emptyset$, implying that $|T_{1}\cap F_{2}|=1$ and  $|\overline{F_{1}}\cap F_{2}|=1$. Let $T_{1}\cap F_{2}=\{a\}$ and $\overline{F_{1}}\cap F_{2}=\{b\}$. We observe that $F_{1}\cap F_{2}\neq\emptyset$, for otherwise, $G[\{a, b, x, x_{2}\}]\cong C_{4}$ and $N_{G}(a)\cap N_{G}(b)\neq\emptyset$, which implies that $G$ has a $\overline{P_{5}}$, a contradiction. By Claim 1, we have $|F_{1}\cap F_{2}|=1$. It follows that $|T_{1}\cap T_{2}|\geq2$. Note that $\overline{F_{1}}\cap T_{2}\neq\emptyset$. Otherwise, $|\overline{F_{1}}\cap\overline{F_{2}}|=1$, which implies $|\overline{F_{1}}|<|F_{1}|$, a contradiction. Hence, $|T_{1}\cap T_{2}|=2$ and $|\overline{F_{1}}\cap T_{2}|=1$. Let $F_{1}\cap F_{2}=\{c\}$ and let $T_{1}\cap T_{2}=\{x, u\}$. Then $bacu$ is a cycle of length four, and either $N_{G}(b)\cap N_{G}(a)\neq\emptyset$ or $N_{G}(b)\cap N_{G}(c)\neq\emptyset$. This implies that $G$ has either $K_{4}^{-}$ or $\overline{P_{5}}$ as a subgraph, a contradiction.
Therefore, $|T_{1}\cap F_{2}|\geq2$. Similarly, we can show that $|T_{1}\cap\overline{F_{2}}|\geq2$. Hence, $|T_{1}\cap F_{2}|=|T_{1}\cap\overline{F_{2}}|=2$.
\end{proof}

Without loss of generality, we assume $x_{1}\in T_{1}\cap F_{2}$. Let $T_{1}\cap F_{2}=\{a, x_{1}\}$, $F_{1}\cap T_{2}=\{b, x_{2}\}$, $T_{1}\cap\overline{F_{2}}=\{b_{1}, b_{2}\}$ and $\overline{F_{1}}\cap T_{2}=\{a_{1}, a_{2}\}$.

\noindent{\bf Claim 5.} $F_{1}\cap F_{2}=\emptyset$, $bx_{2}\notin E(G)$ and $F_{1}\cap\overline{F_{2}}=\emptyset$.

\begin{proof}
Unless $|F_{1}\cap\overline{F_{2}}|=1$ and the vertex is adjacent to $x$, Claim 5 holds, similar to the proof in Lemma \ref{lem5}. Next, we consider this special case.
Let $F_{1}\cap\overline{F_{2}}=\{v\}$. Then $N_{G}(v)=\{b, x, b_{1}, b_{2}\}$. Suppose $F_{1}\cap F_{2}\neq\emptyset$. Then $|F_{1}\cap F_{2}|=1$ by Claim 1. Let $F_{1}\cap F_{2}=\{u\}$. Then $N_{G}(u)=\{a, b, x_{1}, x_{2}\}$. We observe that $bx_{1}\notin E(G)$, $bx_{2}\notin E(G)$, and $b$ cannot be adjacent to both $b_{1}$ and $b_{2}$ simultaneously. Otherwise, $G$ has either $K_{4}^{-}$ or $\overline{P_{5}}$ as a subgraph, a contradiction. Without loss of generality, assume $N_{G}(b)=\{u, v, a, b_{1}\}$. Since $d_{G}(x_{2})\geq4$, either $x_{2}a\in E(G)$ or $x_{2}b_{1}\in E(G)$. However, in either case, $G$ has a $\overline{P_{5}}$, a contradiction. Therefore, $F_{1}\cap F_{2}=\emptyset$. Suppose $bx_{2}\in E(G)$. Then both $bx_{2}xx_{1}$ and $bx_{2}xv$ are cycles of length 4. Since $d_{G}(b)\geq4$, we have $N_{G}(b)\cap N_{G}(x_{2})\neq\emptyset$, which implies that $G$ has a $\overline{P_{5}}$, a contradiction. Thus, $bx_{2}\notin E(G)$. This proves Claim 5.
\end{proof}

By Claim 5, we have $F_{1}=\{b, x_{2}\}$. Furthermore, $N_{G}(b)=\{x_{1}, a, b_{1}, b_{2}\}$ and $N_{G}(x_{2})=\{a, x, b_{1}, b_{2}\}$. This proves {\rm(i)} holds. Clearly, $b_{1}b_{2}\notin E(G)$, for otherwise, $G[\{b, b_{1}, b_{2}, x_{2}\}]\cong K_{4}^{-}$, a contradiction. If $ax_{1}\in E(G)$, then $N_{G}(b)\cong K_{2}\cup 2K_{1}$, and thus, $G$ has a quasi 5-contractible edge by Lemma \ref{lem6}, a contradiction. So $ax_{1}\notin E(G)$. Since $N_{G}(x)\cap\overline{F_{2}}\neq\emptyset$, we have $N_{G}(x)\cap(\overline{F_{1}}\cap\overline{F_{2}})\neq\emptyset$. Without loss of generality, assume $x_{4}\in\overline{F_{1}}\cap\overline{F_{2}}$. In the following, we show that  $x_{3}\in\overline{F_{1}}\cap F_{2}$, which implies that {\rm(ii)} holds.

Suppose $x_{3}\notin\overline{F_{1}}\cap F_{2}$. Then $|\overline{F_{1}}\cap F_{2}|\leq1$. If $|\overline{F_{1}}\cap F_{2}|=1$, then we see that $G$ has either $K_{4}^{-}$ or $\overline{P_{5}}$, a contradiction. So $\overline{F_{1}}\cap F_{2}=\emptyset$. It follows that $N_{G}(a)=\{b, x_{2}, a_{1}, a_{2}\}$ and $N_{G}(x_{1})=\{b, x, a_{1}, a_{2}\}$.
By Lemma \ref{lem4}, $G/ax_{2}$ is 4-connected. Let $F_{3}$ be a quasi fragment with respect to $ax_{2}$. Let $T_{3}=N_{G}(F_{3})$ and $\overline{F_{3}}=V(G)-(F_{3}\cup T_{3})$. Clearly, $x_{1}\in T_{3}$. If $x\in T_{3}$, then without loss of generality, we assume that $\{x_{3}, b_{1}\}\subseteq F_{3}$ and $\{x_{4}, b_{2}\}\subseteq\overline{F_{3}}$. Consequently, $T_{3}=\{a, x_{2}, x_{1}, x, b\}$. Then we may assume that $a_{1}\in F_{3}$ and $a_{2}\in\overline{F_{3}}$ without loss of generality. Since $d_{G}(x_{3})\geq4$, $|F_{3}|\geq4$, which implies that $\{a_{1}, b_{1}, x_{3}\}$ is a 3-cut of $G$, a contradiction. Therefore, $x\notin T_{3}$.

Without loss of generality, we assume $x\in F_{3}$. Similar to the second paragraph of Lemma \ref{lem6}, we have that $|N_{G}(a)\cap F_{3}|=1$, $|N_{G}(a)\cap\overline{F_{3}}|=2$, and $b\in\overline{F_{3}}$. Without loss of generality, assume that $a_{1}\in F_{3}$ and $a_{2}\in\overline{F_{3}}$. It follows $|F_{3}|\leq3$. Since $N_{G}(x)\cong 4K_{1}$, $F_{3}=\{a_{1}, x\}$, and thus $T_{3}=\{a, x_{1}, x_{2}, x_{3}, x_{4}\}$. Moreover, $N_{G}(a_{1})=\{a, x_{1}, x_{3},x_{4}\}$. By a similar argument for $bx_{1}$, we have $N_{G}(b_{i})=\{b, x_{2}, x_{3},x_{4}\}$ for some $i\in\{1, 2\}$. Without loss of generality, assume $N_{G}(b_{1})=\{b, x_{2}, x_{3},x_{4}\}$. If $|V(G)|\geq11$, then $|V(G)|=12$, for otherwise, $\{a_{2}, b_{2}, x_{3}, x_{4}\}$ forms a nontrivial 4-cut of $G$, a contradiction. However, we observe that $G$ has a $\overline{P_{5}}$. Thus, $|V(G)|=11$. If $a_{2}b_{2}\notin E(G)$, then $N_{G}(a_{2})=\{a, x_{1}, x_{3}, x_{4}\}$, and thus $N_{G}(a_{1})=N_{G}(a_{2})$. This implies that $N_{G}(a_{1})$ is a nontrivial 4-cut of $G$, a contradiction. Therefore, $a_{2}b_{2}\in E(G)$. It follows $G\cong C_{11}^{4}$. Thus, $G$ has quasi 5-contractible edges, a contradiction. This proves that $x_{3}\in\overline{F_{1}}\cap F_{2}$, thus completing the proof of the lemma \ref{lem7}.
\end{proof}

By Lemma \ref{lem7}, we have the following lemma.

\begin{lem}\label{lem8}
Let $G$ be a quasi 5-connected graph that contains neither $K_{4}^{-}$ nor $\overline{P_{5}}$. If $G$ has a vertex $x\in V_{4}(G)$ such that $N_{G}(x)\cong 4K_{1}$, then $G$ has a quasi 5-contractible edge.
\end{lem}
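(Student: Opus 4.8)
The plan is to argue by contradiction: suppose $G$ is a quasi 5-connected graph containing neither $K_4^-$ nor $\overline{P_5}$, that $G$ has a vertex $x \in V_4(G)$ with $N_G(x) \cong 4K_1$, and that $G$ has no quasi 5-contractible edge; then $G$ is in particular a contraction critical quasi 5-connected graph, so Lemma~\ref{lem7} applies to $x$. Let $N_G(x) = \{x_1, x_2, x_3, x_4\}$, let $F_1$ be a quasi atom with respect to $E(x)$, and set $T_1 = N_G(F_1)$. By Lemma~\ref{lem7}(i), $G[F_1]$ consists of two isolated vertices each of degree four; write $F_1 = \{y, y'\}$. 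Since both $y$ and $y'$ have degree four, miss each other, and have all four neighbours in $T_1$ (which has five vertices), $y$ and $y'$ share at least three common neighbours in $T_1$; in fact, because $x \in T_1$ is adjacent to exactly one of the $x_i$'s in a way consistent with $F_1$ being a fragment with respect to some $xx_i$, a short count pins down the adjacency pattern between $F_1$ and $T_1$ rather tightly. The first block of work is to extract from this count a vertex $u \in T_1$ that is a common neighbour of $y$ and $y'$.

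Once such a common neighbour $u$ is in hand, Lemma~\ref{lem7}(ii) forces $|N_G(u) \cap N_G(x)| \le 2$, i.e. $u$ is adjacent to at most two of $x_1, x_2, x_3, x_4$. The second block is to push this further: since $G$ contains no $K_4^-$, the vertex $u$ together with $y$ and $y'$ already forms a path $y u y'$, so if $u$ had a further common neighbour with $y$ (other than the members of $F_1 \cup \{u\}$) we would be in danger of producing $\overline{P_5}$ on the five vertices $\{y, y', u, \ast, \ast\}$, where the stars are chosen among the remaining neighbours in $T_1$ and among $N_G(x)$. The idea is to do a small case analysis on \emph{where} the neighbours of $u$, $y$, $y'$ sit relative to $F_1$, $T_1$, $\overline{F_1}$ and, crucially, to bring in a \emph{second} fragment: apply Lemma~\ref{lem4} to a suitable edge incident with $y$ (legitimate since its contraction leaves minimum degree $\ge 4$, using that $y$, $y'$ and $u$ all have degree four and $G$ has no $K_4^-$), obtaining a 4-connected graph, hence a quasi fragment $F_2$. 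Comparing $F_1$ and $F_2$ by the submodular-type counting on the sets $X_1, \dots, X_4$ — exactly the mechanism already exercised in Lemmas~\ref{lem5} and~\ref{lem7} — should collapse the configuration either to an explicit small graph (most plausibly forcing $G \cong C_{11}^4$, which by the remark after Lemma~\ref{lem4} \emph{does} have quasi 5-contractible edges, a contradiction) or directly to an induced $K_4^-$ or $\overline{P_5}$.

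Concretely, the steps in order are: (1) set up the contradiction hypothesis and invoke Lemma~\ref{lem7} to get $F_1 = \{y, y'\}$ with $d_G(y) = d_G(y') = 4$ and $N_G(y), N_G(y') \subseteq T_1$, $|T_1| = 5$; (2) count the edges between $F_1$ and $T_1$ to locate a common neighbour $u$ of $y$ and $y'$, and record which $x_i$'s lie in $T_1$ versus $\overline{F_1}$; (3) apply Lemma~\ref{lem7}(ii) at $u$ to bound $|N_G(u) \cap N_G(x)| \le 2$; (4) pick an edge $e$ incident with $y$ whose contraction keeps $\delta \ge 4$, apply Lemma~\ref{lem4} to get $G/e$ 4-connected and a quasi fragment $F_2$ with $N_G(F_2) = T_2$; (5) run the intersection count on $F_1, T_1$ versus $F_2, T_2$ to drive the structure down to either $C_{11}^4$ or a forbidden $K_4^-$ / $\overline{P_5}$, each of which is a contradiction. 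I expect the main obstacle to be step~(5): coordinating the two fragments and keeping the case analysis on the placement of the neighbours of $u$ and $y'$ from fragmenting into too many subcases, while making sure that in every surviving branch one genuinely exhibits an induced $\overline{P_5}$ (it is easy to produce a $C_4$ with an extra common-neighbour vertex — i.e.\ a $P_5$ in the complement — but one must verify the fifth vertex is non-adjacent to the right vertices) or recognises $C_{11}^4$ from its degree-four-everywhere, $K_4^-$-free local structure. The detailed bookkeeping there will mirror the arguments in Claims~1--5 of Lemma~\ref{lem7} and the final paragraph of that proof, so the work is in organising rather than inventing the estimates.
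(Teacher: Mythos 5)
Your step (1) is the right start, and step (3) is a correct reading of Lemma~\ref{lem7}(ii), but the crucial structural move the paper makes after that is missing from your sketch. In the paper's proof of Lemma~\ref{lem8}, once $F=\{x_2,b\}$ is extracted from Lemma~\ref{lem7}(i), one observes that $N_G(b)=\{x_1,a_1,a_2,a_3\}$ must itself be isomorphic to $4K_1$ (any edge inside it gives $K_4^-$ or $\overline{P_5}$), and then \emph{re-applies Lemma~\ref{lem7} with $b$ playing the role of $x$}. This produces a second quasi atom $C$ with $|C|=2$, both of whose vertices have degree four, and — this is where part (ii) earns its keep — rules out that $C$ is a quasi fragment with respect to $bx_1$, forcing $C$ to sit with respect to some $ba_i$ and pinning it down (after relabelling) to $C=\{x_1,x_3\}$, with $N_G(x_1)$ and $N_G(x_3)$ determined. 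A third quasi fragment $D$ with respect to $bx_1$ is then brought in and dispatched by direct neighbourhood analysis. In short, the engine is two invocations of Lemma~\ref{lem7} at two different degree-four vertices, not a head-to-head $X_1,\dots,X_4$ intersection count between $F_1$ and a generic second fragment $F_2$.

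Your plan to "run the intersection count on $F_1,T_1$ versus $F_2,T_2$" would not go through in the form you describe. The $X_i$ machinery in Lemmas~\ref{lem5} and~\ref{lem7} is tuned to the situation where both fragments are with respect to edges incident with the \emph{same} vertex $x$ (so $x\in T_1\cap T_2$ and the four cells $F_i\cap\overline{F_j}$ interact with $N_G(x)$ in controlled ways); your $F_2$ is with respect to an edge incident with $y$, and $x$ need not lie in $T_2$ at all, so the nonemptiness and size bounds on the various cells do not transfer. You also forecast a terminal $C_{11}^4$ branch; that exceptional graph is actually disposed of inside the proof of Lemma~\ref{lem7} (when establishing $x_3\in\overline{F_1}\cap F_2$), and Lemma~\ref{lem8} never reaches it — all of its final contradictions are direct $K_4^-$/$\overline{P_5}$/trivial-cut contradictions. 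So while your intent to reuse the paper's techniques is sound, the specific next step you need is not the counting argument but the re-invocation of Lemma~\ref{lem7} at $b$, combined with an application of part~(ii) to exclude the $bx_1$-fragment case; without that, step~(5) is a genuine gap rather than mere bookkeeping.
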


\begin{proof}
Suppose that $G$ has no quasi 5-contractible edges. Let $N_{G}(x)=\{x_{1}, x_{2}, x_{3}, x_{4}\}$. Let $F$ be a quasi atom with respect to $E(x)$. Without loss of generality, assume that $F$ is a quasi fragment with respect to $xx_{1}$ and $x_{2}\in F$. By Lemma \ref{lem7}, $G[F]$ consists of two isolated vertices, each of degree four. Let $F=\{x_{2}, b\}$.
Let $T=N_{G}(F)=\{x, x_{1}, a_{1}, a_{2}, a_{3}\}$ and let $\overline{F}=V(G)-(F\cup T)$. Then $N_{G}(x_{2})=\{x, a_{1}, a_{2}, a_{3}\}$ and $N_{G}(b)=\{x_{1}, a_{1}, a_{2}, a_{3}\}$.
Thus, $N_{G}(b)\cong 4K_{1}$. Otherwise, $G$ has either $K_{4}^{-}$ or $\overline{P_{5}}$ as a subgraph, a contradiction.

Let $C$ be a quasi atom with respect to $E(b)$ and let $R=N_{G}(C)$, $\overline{C}=V(G)-(C\cup R)$. By Lemma \ref{lem7}, $G[C]$ consists of two isolated vertices, each of degree four.
If $C$ is a quasi fragment with respect to $bx_{1}$, then $x_{2}\in R$. Thus, $C=\{a_{i}, x\}$, which implies that $N_{G}(a_{i})=\{b, x_{2}, x_{3}, x_{4}\}$. However, by Lemma \ref{lem7} {\rm(ii)}, this is impossible. Therefore, we assume that $C$ is a quasi fragment with respect to $ba_{1}$ without loss of generality. Since $N_{G}(a_{1})\cap N_{G}(x_{2})=\emptyset$, $x_{2}\notin R$. This implies that $x_{1}\in C$, and consequently, $x\in R$. Without loss of generality, we assume that $C=\{x_{1}, x_{3}\}$. Let $R=\{b, a_{1}, x, t_{1}, t_{2}\}$. Then $N_{G}(x_{1})=\{b, x, t_{1}, t_{2}\}$ and $N_{G}(x_{3})=\{a_{1}, x, t_{1}, t_{2}\}$.

Let $D$ be a quasi fragment with respect to $bx_{1}$. Let $Q=N_{G}(D)$ and $\overline{D}=V(G)-(D\cup Q)$. Clearly, $x_{2}\in Q$. If $x\in Q$, we assume that $x_{3}\in D$ without loss of generality. It then follows that $\{t_{1}, t_{2}\}\subset D\cup Q$, which implies that $Q-\{x_{1}\}$ is a nontrivial 4-cut of $G$, a contradiction. Hence, $x\notin Q$. We assume that $x\in D$ and $t_{1}\in\overline{D}$ without loss of generality. Then $x_{3}\in Q$. Similar to the second paragraph of Lemma \ref{lem6}, we have that $|N_{G}(b)\cap D|=1$ and $|N_{G}(b)\cap\overline{D}|=2$.
If $a_{1}\in D$, then $t_{2}\in D$. Otherwise, $D=\{x, a_{1}\}$, which implies that $N(a_{1})=\{b, x_{2}, x_{3}, x_{4}\}$, a contradiction. It follows $|D|\leq4$. If $|D|=4$, then $D=\{a_{1}, t_{2}, x, x_{4}\}$. Hence, $N_{G}(a_{1})\cap N_{G}(x)=\{x_{2}, x_{3}, x_{4}\}$, a contradiction. So $D=\{a_{1}, x, t_{2}\}$. However, we see that $G[\{a_{1}, t_{2}, x_{1}, b, x_{3}\}]\cong\overline{P_{5}}$, a contradiction. If $a_{2}\in D$ or $a_{3}\in D$, we also can obtain a contradiction that $G$ has either $K_{4}$ or $\overline{P_{5}}$ similarly. This complete the proof.
\end{proof}

Let $C_{4}^{+}$ denote the graph shown in Figure \ref{fig1e}. Now, we are prepared to prove Theorem \ref{thm5}.

\begin{proof}[{\bf Proof of Theorem \ref{thm5}}]
By way of contradiction, we assume that there is a contraction critical quasi 5-connected graph $G$ that contains neither $K_{4}^{-}$ nor $\overline{P_{5}}$. By Theorem \ref{thm2}, we have $\kappa(G)=4$. Thus, $\delta(G)=4$. Then we see that for any vertex of degree four in $G$, its neighbor set is isomorphic to $4K_{1}$ or $2K_{1}\cup K_{2}$ or $2K_{2}$, for otherwise, $G$ contains either $K_{4}^{-}$ or $\overline{P_{5}}$, a contradiction. By Lemmas \ref{lem6} and \ref{lem8}, the neighbor set of each vertex of degree four is isomorphic to $2K_{2}$. Moreover, for any two adjacent vertices in the neighbor set, at least one of them has degree four.

Suppose there exists a vertex $x\in V_{4}(G)$ with $N_{G}(x)=\{x_{1}, x_{2}, x_{3}, x_{4}\}$ such that $\{x_{1}x_{2}, x_{3}x_{4}\}\subset E(G)$ and $d_{G}(x_{1})\geq5$. Then $d_{G}(x_{2})=4$.
By Lemma \ref{lem4}, $G/xx_{2}$ is 4-connected. Since $G$ has no quasi 5-contractible edges, $G/xx_{2}$ has a nontrivial 4-cut. That is, $G$ has a 5-cut $T$ such that $\{x, x_{2}\}\subset T$. And $G-T$ can be partitioned into two subgraphs, say $G[F]$ and $G[\overline{F}]$, where each subgraph has at least two vertices. Without loss of generality, we assume $x_{1}\in F$ and $\{x_{3}, x_{4}\}\subseteq T\cup\overline{F}$. Since $d_{G}(x_{2})=4$, we see that $(N_{G}(x)\cup N_{G}(x_{2}))\cap F=\{x_{1}\}$. Thus, $|F|=2$. Let $F=\{x_{1}, a\}$. Then $d_{G}(a)=4$. However, we observe that $N_{G}(a)\not\cong 2K_{2}$, a contradiction.

Hence, all neighbors of a vertex with degree four also have degree four. This implies that $G$ is 4-connected, 4-regular, and every edge of $G$ is in a triangle. By Lemmas \ref{lem1} and \ref{lem2}, $G$ is isomorphic to either $C_{n}^{2}$ for $n\geq5$ or the line graph of a cubic cyclically 4-connected graph. For $n\geq8$, it is straightforward to verify that $C_{n}^{2}$ has nontrivial 4-cuts. On the other hand, for  $n=5, 6, 7$, $C_{n}^{2}$ contains a $\overline{P_{5}}$. Therefore, $G\cong L(G^{\ast})$, where $G^{\ast}$ is a cubic cyclically 4-connected graph. By Lemma \ref{lem3}, $G^{\ast}$ is obtained by repeatedly adding handles starting from $K_{3,3}$ or the cube. Both $K_{3,3}$ and the cube contain $C_{4}^{+}$ as a subgraph. Moreover, if a graph contains $C_{4}^{+}$ as a subgraph, then the graph obtained from it by adding a handle also contains $C_{4}^{+}$ as a subgraph. Thus, $G^{\ast}$ has $C_{4}^{+}$ as a subgraph. Note that $L(C_{4}^{+})\cong\overline{P_{5}}$. This implies that $G$ has a $\overline{P_{5}}$, a contradiction.
This completes the proof of Theorem \ref{thm5}.
\end{proof}

\end{document}